\documentclass[11pt,a4paper]{article}
\usepackage{amsmath}
\usepackage{amsfonts}
\usepackage{amssymb}
\usepackage{centernot}
\usepackage[margin=2.7cm]{geometry}
\usepackage[colorlinks=true,linkcolor=blue,citecolor=blue]{hyperref}
\usepackage{enumitem}
\usepackage{graphicx,fancybox}
\usepackage{tikz}
\usepackage{subfigure}
\usetikzlibrary{positioning}
\usepackage{tkz-berge}
\usepackage{hhline}
\usepackage{multirow}
\usepackage[capitalise]{cleveref}
\usepackage[ruled,vlined]{algorithm2e}
\usepackage{siunitx}
\usepackage{sudoku}
\usepackage{accents}

\usepackage{amsthm}

\newtheorem{proposition}{Proposition}[section]

\newtheorem{fact}{Fact}[section]
\newtheorem{remark}{Remark}[section]
\newtheorem{example}{Example}[section]

\Crefname{formulation}{Formulation}{Formulations}
\Crefname{example}{Example}{Example}
\Crefname{algocf}{Algorithm}{Algorithms}


\newcommand{\diag}{\operatorname{diag}}

\newcommand{\rank}{\operatorname{rank}}

\newcommand{\tr}{\operatorname{tr}}

\newcommand{\Wd}{\operatorname{Wd}}
\newcommand{\E}{\mathbb{E}}
\newcommand{\R}{\mathbb{R}}
\newcommand{\Sy}{\mathcal{S}}
\newcommand{\sdp}{\mathcal{S}_{+}}
\newcommand{\Id}{\operatorname{Id}}

\newcounter{row}
\newcounter{col}

\DeclareFontFamily{U}{mathx}{\hyphenchar\font45}
\DeclareFontShape{U}{mathx}{m}{n}{
      <5> <6> <7> <8> <9> <10>
      <10.95> <12> <14.4> <17.28> <20.74> <24.88>
      mathx10
      }{}
\DeclareSymbolFont{mathx}{U}{mathx}{m}{n}
\DeclareFontSubstitution{U}{mathx}{m}{n}
\DeclareMathAccent{\widecheck}{0}{mathx}{"71}

\makeatletter
\newcommand{\mylabel}[2]{#2\def\@currentlabel{#2}\label{#1}}
\makeatother


\title{An enhanced formulation for solving graph coloring problems with the Douglas--Rachford algorithm}

\author{Francisco J. Arag\'on Artacho\thanks{Department of Mathematics,
University of Alicante, \textsc{Spain}. e-mail:~\url{francisco.aragon@ua.es}}
        \and Rub\'en Campoy\thanks{Department of Mathematics,
University of Alicante, \textsc{Spain}. e-mail:~\url{ruben.campoy@ua.es}}
		\and Veit Elser\thanks{Department of Physics,
		Cornell University, Ithaca, \textsc{NY}. e-mail:~\url{ve10@cornell.edu}}
}

\usepackage{todonotes}

\begin{document}

\maketitle

\begin{abstract}
We study the behavior of the Douglas--Rachford algorithm on the graph vertex-coloring problem. Given a graph and a number of colors, the goal is to find a coloring of the vertices so that all adjacent vertex pairs have different colors. In spite of the combinatorial nature of this problem, the Douglas--Rachford algorithm was recently shown to be a successful heuristic for solving a wide variety of graph coloring instances, when the problem was  cast as a feasibility problem on binary indicator variables. In this work we consider a different formulation, based on semidefinite programming. The much improved performance of the Douglas--Rachford algorithm, with this new approach, is demonstrated through various numerical experiments.

\end{abstract}

\paragraph*{Keywords:} Douglas--Rachford algorithm, graph coloring, feasibility problem, nonconvex constraints
\paragraph*{MSC2010:} 47J25, 90C27, 47N10

\section{Introduction}
Let $G=(V,E)$ be a \emph{graph} with $n$ vertices $V=\{1,\ldots,n\}$ that are connected by edges $E\subset V\times V$. A \emph{proper $m$-coloring} of the graph $G$ is a mapping $c:V\mapsto K:=\{1,\ldots,m\}$, assigning one of the $m$ possible colors to each vertex, such that no two adjacent vertices share the same color, that is,
$$c(i)\neq c(j) \text{ for all } \{i,j\}\in E.$$
The \emph{graph coloring problem} consists in determining whether it is possible to find a proper $m$-coloring of the graph~$G$. Many problems arising from different fields can be formulated as graph coloring problems. Some applications include time--tabling and scheduling~\cite{L79}, computer register allocation~\cite{C04}, radio frequency assignment~\cite{H80}, and printed circuit board testing~\cite{GJS76}. Since the graph coloring problem is proved to be NP-complete~\cite{K72}, most commonly used solvers rely on heuristics. For a basic reference on graph coloring, algorithms and applications, see, e.g.,~\cite{JT95,L16}, or the surveys~\cite{FT12,PMX98}.

The aim of this paper is to study the behavior of the Douglas--Rachford (DR) algorithm when it is applied to the Karger--Motwani--Sudan (KMS) \cite{KMS} semi-definite programming formulation of the graph coloring problem. The KMS formulation is reviewed in \Cref{sec:formulation}.  The DR algorithm belongs to the family of so-called projection algorithms, which arise in convex optimization. Although the convergence of the DR algorithm is only guaranteed in the convex setting, it has been successfully applied to many nonconvex problems, including those of combinatorial type (see, e.g.,~\cite{ABTmatrix,ABTcomb,BKroad,Elser}). However, the theory in nonconvex settings is very limited, with results on global behavior limited to special sets~\cite{ABT16,benoist}. In most applications the constraint sets satisfy some type of regularity property, and local convergence can be proved,  see, e.g.~\cite{BNlocal,HLnonconvex,Plinear}.

This is not the first time that the DR scheme has been used to solve graph coloring problems. It was first employed by Elser et al.~\cite{Elser} for edge-colorings, in particular, colorings that avoid monochromatic triangles. In a more recent paper by Arag\'on and Campoy~\cite{AC18}, the DR algorithm was applied as a heuristic for vertex coloring. In that work, the feasibility problem was expressed in terms of binary indicator variables, the same variables that would be used in an integer programming formulation. This formulation was easily adapted to solve (using DR) variants and generalizations of the graph coloring problem, including list coloring, partial coloring, and finding Hamiltonian cycles.

Our numerical experiments indicate that the KMS formulation appears to be superior to the indicator variable formulation, when using the DR heuristic. While we do not have an interpretation of this result, it is empirically supported on a wide spectrum of problem instances.

The remainder of the paper is organized as follows. In~\Cref{sec:prelim} we recall some preliminary notions and results. The KMS formulation is reviewed in~\Cref{sec:formulation}. In~\Cref{sec:implementation} we give details on the DR implementation. Finally, in~\Cref{sec:numexp} we collect various numerical experiments where we show the good performance of the DR algorithm for the KMS formulation.

\section{Preliminaries}\label{sec:prelim}

Let $\E$ be a finite-dimensional Hilbert space with inner product $\langle\cdot,\cdot\rangle$ and induced norm $\|\cdot\|$. Throughout this paper the space $\E$ will be the Euclidean space $\R^{n\times m}$ of the $n\times m$ real matrices with inner product $\langle A,B\rangle=\tr\left(A^TB\right).$ The induced norm corresponds to the \emph{Frobenius} norm,
$$\|A\|_F=\sqrt{\tr\left(A^TA\right)}=\sqrt{\sum_{i=1}^n\sum_{j=1}^m a_{ij}^2}.$$
We denote by $\mathcal{S}^n$ the set of symmetric matrices in $\R^{n\times n}$, while $\sdp^n$ is the set of positive semidefinite matrices.

The \emph{projector} onto a nonempty closed set $C\subseteq \E$ is the set-valued mapping $P_C:\E\rightrightarrows C$ given by
$$P_C(x):=\left\{p\in C :  \|p-x\|=\inf_{c\in C}\|c-x\|\right\},$$
and the \emph{reflector} is defined as $R_C:=2P_C-\Id$, where $\Id$ denotes the identity operator. Any element $\pi_C(x)\in P_C(x)$ is said to be a \emph{best approximation} or a \emph{projection} of $x$ onto $C$. If $C$ is also convex, then there exists a unique projection. In fact, a closed set in a Hilbert space is convex if and only if its projector is everywhere single-valued (see, e.g.,~\cite[Theorem~3.2]{ABMY14}).

Given $C_1,C_2,\ldots,C_r\subseteq \E$, the \emph{feasibility problem} consists in finding a point belonging to all these sets, that is,
\begin{equation}\label{eq:FeasibilityProblem}
\text{Find } x\in\bigcap_{i=1}^r C_i.
\end{equation}

The DR algorithm is a powerful tool for solving feasibility problems, whenever the individual projectors onto the sets can be easily computed.
The DR scheme is defined for problems involving two sets. Next we recall its main properties in the convex setting.

\begin{fact}\label{fact:convexDR}
Let $A,B\subseteq \E$ be closed and convex sets and let $\lambda\in{]0,2[}$. Consider the DR operator defined by
\begin{equation}\label{eq:DR}
T_{A,B,\lambda}:=\left(1-\frac{\lambda}{2}\right)\Id+\frac{\lambda}{2} R_BR_A.
\end{equation}
Given any $x_0\in \E$,  define $x_{n+1}:=T_{A,B,\lambda}(x_n)$, for every $n\geq0$. Then the following holds:
\begin{itemize}[noitemsep,topsep=5pt]
\item [(i)] If $A\cap B\neq\emptyset$, then $\{x_n\}$ converges to a point $x^\star$ with $P_A(x^\star)\in A\cap B$.
\item [(ii)] If $A\cap B=\emptyset$, then $\|x_n\|\rightarrow +\infty$.
\end{itemize}
\end{fact}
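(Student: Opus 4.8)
The plan is to deduce everything from standard properties of \emph{averaged} nonexpansive operators. First I would note that, $A$ and $B$ being closed and convex, the projectors $P_A,P_B$ are firmly nonexpansive, so the reflectors $R_A,R_B$ are nonexpansive and hence so is $R_BR_A$; writing $\alpha:=\lambda/2\in{]0,1[}$, the operator $T:=T_{A,B,\lambda}=(1-\alpha)\Id+\alpha R_BR_A$ is therefore $\alpha$-averaged. In particular $T$ is (Lipschitz) continuous, $\Fix T=\Fix(R_BR_A)$ since $\lambda\neq0$, and for all $x,y\in\E$
\begin{equation}\label{eq:avgineq}
\|Tx-Ty\|^2\leq\|x-y\|^2-\frac{1-\alpha}{\alpha}\,\bigl\|(\Id-T)x-(\Id-T)y\bigr\|^2 .
\end{equation}
Next I would identify the fixed points: if $x\in\Fix(R_BR_A)$ and $a:=P_A(x)$, then $R_Ax=2a-x$ and the equation $R_BR_Ax=x$ rewrites as $P_B(2a-x)=a$, so $a=P_A(x)\in A$ and $a=P_B(2a-x)\in B$, i.e.\ $a\in A\cap B$; conversely every $a\in A\cap B$ is fixed by $R_A$ and by $R_B$, hence by $T$. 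Thus $\Fix T\neq\emptyset\iff A\cap B\neq\emptyset$, and whenever a fixed point $x^\star$ exists one already has $P_A(x^\star)\in A\cap B$.

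For part~(i), assume $A\cap B\neq\emptyset$ and pick any $z\in\Fix T$. Taking $y=z$ in~\eqref{eq:avgineq} shows that $\{\|x_n-z\|\}$ is non-increasing, so $\{x_n\}$ is bounded, and that $\sum_{n\geq0}\|x_n-x_{n+1}\|^2<\infty$, whence $x_n-Tx_n=x_n-x_{n+1}\to0$. Since $\E=\R^{n\times m}$ is finite-dimensional, $\{x_n\}$ has a cluster point $x^\star$, and continuity of $T$ together with $x_n-Tx_n\to0$ forces $x^\star=Tx^\star$. Applying~\eqref{eq:avgineq} once more with $y=x^\star$, the sequence $\{\|x_n-x^\star\|\}$ is non-increasing and has a subsequence converging to $0$, hence $x_n\to x^\star$; the fixed-point analysis above then gives $P_A(x^\star)\in A\cap B$.

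For part~(ii), assume $A\cap B=\emptyset$, so that $\Fix T=\emptyset$. Let $v$ denote the element of minimal norm of $\overline{\ran(\Id-T)}$ (a closed convex set, since $\Id-T$ is maximally monotone). The crucial ingredient is the classical fact, from the theory of Krasnoselskii--Mann iterations of averaged operators, that $(\Id-T)x_n\to v$. If $v\neq0$, then from the telescoping identity $x_n=x_0-\sum_{k=0}^{n-1}(\Id-T)x_k$ and $(\Id-T)x_k\to v\neq0$ one gets $\bigl\|\sum_{k=0}^{n-1}(\Id-T)x_k\bigr\|\to+\infty$, hence $\|x_n\|\to+\infty$. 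If $v=0$, then whenever $\liminf_n\|x_n\|<+\infty$ one could extract a subsequence $x_{n_k}\to\bar x$ and deduce $(\Id-T)\bar x=\lim_k(\Id-T)x_{n_k}=0$, i.e.\ $\bar x\in\Fix T=\emptyset$, a contradiction; hence again $\|x_n\|\to+\infty$. The routine parts are the algebraic identities for $\Fix(R_BR_A)$ and the Fej\'er-type estimates coming from~\eqref{eq:avgineq}; the main obstacle is establishing the convergence $(\Id-T)x_n\to v$ of the displacement vectors, which is exactly what upgrades ``$A\cap B=\emptyset$'' to ``$\|x_n\|\to+\infty$'', and for which I would either cite the relevant nonexpansive-operator literature or reconstruct the argument from the monotonicity of $\{\|(\Id-T)x_n\|\}$ and the convexity of $\overline{\ran(\Id-T)}$.
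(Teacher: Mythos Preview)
Your sketch is correct. Note, however, that the paper does not actually give a proof: the statement is recorded as a \emph{Fact}, and the ``proof'' consists solely of pointers to \cite[Theorem~26.11]{BC17} (or \cite[Corollary~5.2.4]{C12}) for part~(i) and to \cite[Corollary~2.2]{BBR78} for part~(ii). What you have written is effectively a self-contained reconstruction of those cited arguments: the Fej\'er/asymptotic-regularity route via the averaged-operator inequality~\eqref{eq:avgineq} for~(i), and the minimal-displacement-vector analysis for~(ii), are precisely the ideas behind those references. Two small observations. For~(i) you exploit the finite-dimensionality of $\E=\R^{n\times m}$ to pass from boundedness to a strongly convergent subsequence, whereas the cited results cover general Hilbert spaces via weak sequential compactness together with the demiclosedness of $\Id-T$; your shortcut is legitimate in the paper's declared setting. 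For~(ii), you correctly flag the convergence $(\Id-T)x_n\to v$ as the non-routine ingredient and propose to cite it; since the paper itself simply invokes \cite{BBR78} for the whole of~(ii), this is no loss of rigor relative to the original, and your case split on $v\neq 0$ versus $v=0$ (again using finite-dimensional compactness in the latter) is clean.
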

\begin{proof}
(i) See, e.g.,~\cite[Theorem~26.11]{BC17} or~\cite[Corollary~5.2.4]{C12}. (ii) See~\cite[Corollary~2.2]{BBR78}.
\end{proof}

Although there is no guarantee of convergence when DR is applied to general (not necessarily convex) closed sets, it can still be applied as a heuristic. In this framework, observe that the DR operator may be multivalued due to the fact that the projection onto nonconvex sets is not necessarily unique. Therefore, the equality in~\eqref{eq:DR} must be replaced by an inclusion, and the iteration takes the form
\begin{equation}\label{eq:NonconvexDR}
x_{n+1}\in T_{A,B,\lambda}(x_n):=\left\{x_n+\lambda(b_n-a_n)\in \E: a_n\in P_A(x_n), b_n\in P_B(2a_n-x_n)\right\}.
\end{equation}

Finally, we recall that any feasibility problem \eqref{eq:FeasibilityProblem} can be reduced to a two-set problem through Pierra's \emph{product space reformulation}~\cite{Pierra}. This permits us to employ the scheme~\eqref{eq:DR} for solving feasibility problems involving more than two sets. The reformulation relies on the equivalence
\begin{equation*}
	x\in\bigcap_{i=1}^r C_i\subseteq \E \Leftrightarrow (x,x,\ldots,x)\in C\cap D\subseteq \E^r,
\end{equation*}
where the constraint sets $C$ and $D$ are defined as
\begin{equation*}
C:=C_1\times C_2\times\ldots\times C_r\quad\text{and} \quad D:=\left\{(x,x,\ldots,x)\in \E^r:x\in \E\right\}.
\end{equation*}
Moreover, the projectors onto $C$ and $D$ are readily computable as long as the projectors onto $C_1,C_2,\ldots,C_r$ are. Indeed, for any $\mathbf{x}=(x_1,x_2,\ldots,x_r)\in \E^r$, we have
\begin{gather*}
P_C(\mathbf{x})=P_{C_1}(x_1)\times P_{C_2}(x_2)\times\ldots\times P_{C_r}(x_r),\\
P_D(\mathbf{x})=\left( \frac{1}{r}\sum_{i=1}^r x_i, \frac{1}{r}\sum_{i=1}^r x_i,\ldots, \frac{1}{r}\sum_{i=1}^r x_i \right);
\end{gather*}
see~\cite[Lemma~1.1]{Pierra}. For further details see, e.g.,~\cite[Section~3]{ABTcomb}.

\section{Coloring graphs with vertices of the regular simplex}\label{sec:formulation}

Karger, Motwani and Sudan~\cite{KMS} proposed using the geometry of the regular simplex to formulate the graph vertex-coloring problem. This geometrical encoding of the problem, which respects all its symmetries, is well suited to projection based algorithms.

Suppose that we have a proper $m$-coloring of the graph $G=(V,E)$ given by  $c:V\mapsto K$. The $m$-coloring $c$ can be represented by a matrix as follows.
Let $u_1, u_2, \ldots, u_{m}\in\R^{m-1}$ be the vertices of a standard centered regular $(m-1)$-simplex.
Then, $\{u_1, u_2, \ldots, u_{m}\}$ are $m$ unit vectors whose pairwise dot products are equal to $\frac{-1}{m-1}$, since
\begin{equation*}
\langle u_1+\cdots+u_m, u_1+\cdots+u_m\rangle=0.
\end{equation*}
Each of the vertices of the $(m-1)$-simplex shall represent one of the $m$ colors. Hence, the $m$-coloring of the graph $G$ can be recovered from the matrix
\begin{equation}\label{eq:Uc0}
U_c:=\left[u_{c(1)},u_{c(2)},\ldots,u_{c(n)}\right]\in\R^{(m-1)\times n},
\end{equation}
whose rows are vertices of the $(m-1)$-simplex (possibly repeated). Finally, let us construct the \emph{Gram matrix} associated with $W_c$, namely,
\begin{equation}\label{eq:Uc}
W_c:=U_c^TU_c\in\R^{n\times n}.
\end{equation}
One can easily check that $W_c$ satisfies the following properties (see, e.g.~\cite[Theorem~7.2.10]{HJ13}):
\begin{itemize}
\item[\mylabel{(P1)}{(P1)}] $W_c\in\sdp^n$,
\item[\mylabel{(P2)}{(P2)}] $\rank(W_c)\leq m-1$,
\item[\mylabel{(P3)}{(P3)}] $W_c\in\left\{1,\frac{-1}{m-1}\right\}^{n\times n}$ and some of the entries of $W_c=[w_{ij}]$ are determined as follows:
\begin{equation*}
w_{ii}=1,\ \forall i\in V, \quad\text{and}\quad
w_{ij}=\frac{-1}{m-1},\ \forall \{i,j\}\in E.
\end{equation*}
\end{itemize}

Therefore, every valid $m$-coloring of the graph $G$ leads to a matrix having properties~\ref{(P1)}--\ref{(P3)}. In fact, this is an equivalence, as we shall show after the next illustrative example.

\begin{example}\label{example}
Consider a graph $G=(V,E)$ where the set of vertices is $V=\{1,2,3,4,5\}$, and the set of edges is $E=\left\{\{1,2\},\{1,3\},\{2,3\},\{2,4\},\{3,5\}\right\}$. A proper $3$-coloring of $G$ is shown in~\Cref{fig:example_a}. We identify each of the colors with one of the vertices $u_1,u_2,u_3\in\R^2$ of a standard centered regular $2$-simplex (see~\Cref{fig:example_b}), where
$$u_1=\left(1,0\right)^T, \quad u_2=\frac{1}{2}\left(-1,\sqrt{3}\right)^T \quad and \quad u_3=\frac{1}{2}\left(-1,-\sqrt{3}\right)^T.$$
Then the matrix representation of $c$ given in~\eqref{eq:Uc} becomes
{\def\arraystretch{1.5}
\begin{equation*}
W_c=U_c^TU_c=\left(\begin{array}{ccccc} \boxed{1} & \boxed{-0.5} & \boxed{-0.5} & 1 & -0.5\\
\boxed{-0.5} & \boxed{1} & \boxed{-0.5} & \boxed{-0.5} & 1\\
\boxed{-0.5} & \boxed{-0.5} & \boxed{1} & -0.5 & \boxed{-0.5}\\
1 & \boxed{-0.5} & -0.5 & \boxed{1} & -0.5\\
-0.5 & 1 & \boxed{-0.5} & -0.5 & \boxed{1} \end{array}\right),\quad \text{with } U_c=\left[u_1,u_2,u_3,u_1,u_2\right].
\end{equation*}}%
The boxed entries in $W_c$ correspond to those determined by~\ref{(P3)}.

\begin{figure}[ht!]
	\centering
	\subfigure[A $3$-coloring of the graph\label{fig:example_a}]{\hspace{4ex}
	\begin{tikzpicture}[scale=.85,transform shape]%
		\def\sep{1.25}
		\GraphInit[vstyle=Normal]
		\SetVertexNoLabel
		\tikzstyle{EdgeStyle}=[thick]
		\coordinate (A) at (-5,0);
		\coordinate (B) at (5,0);
		\Vertex[x=-\sep, y=-\sep] {c0}
		\Vertex[x=-\sep, y=\sep] {c1}
		\Vertex[x=\sep, y=-\sep] {c2}
		\Vertex[x=\sep, y=\sep] {c3}
		\Vertex[x=2*\sep, y=0] {c4}
		\AddVertexColor{red!50}{c0,c3}
		\AddVertexColor{green!50}{c1,c4}
		\AddVertexColor{blue!40}{c2}
		\AssignVertexLabel{c}{$1$,$2$,$3$,$4$,$5$}
		\Edges(c0,c1,c2,c0)
		\Edges(c1,c3)
		\Edges(c2,c4)
	\end{tikzpicture}\hspace{4ex}}\qquad
	\subfigure[A standard centered regular $2$-simplex\label{fig:example_b}]{\hspace{8ex}
	\begin{tikzpicture}[line join = round, line cap = round, scale=1, transform shape]
		\pgfmathsetmacro{\factor}{1};
		\pgfmathsetmacro{\arist}{0.3};
		\pgfmathsetmacro{\verti}{0.6};
		\pgfmathsetmacro{\eje}{0.1};
		\def\sep{1.3}
		\coordinate [label=above right:{\color{red!50!black}$u_1$}] (A) at (1 , 0);
		\coordinate [label=above left:{\color{green!30!black}$u_2$}] (B) at (-0.5,  0.866);
		\coordinate [label=below left:{\color{blue!50!black}$u_3$}] (C) at ( -0.5,  -0.866);
		
		\draw[-,line width=\eje mm,gray] (-\sep,0) -- (\sep,0);
		\draw[-,line width=\eje mm,gray] (0,-\sep) -- (0,\sep);
		
		\draw[->,line width=\verti mm,red!80] (0,0)--(A);
		\draw[->,line width=\verti mm,green!70!black] (0,0)--(B);
		\draw[->,line width=\verti mm,blue!80] (0,0)--(C);
		\draw[dashed,line width=\arist mm,black!50,opacity=0.8] (A)--(B)--(C)--cycle;
\end{tikzpicture}\hspace{8ex}
	}
	\caption{Graphical representation of~\Cref{example}}\label{fig:example}
\end{figure}
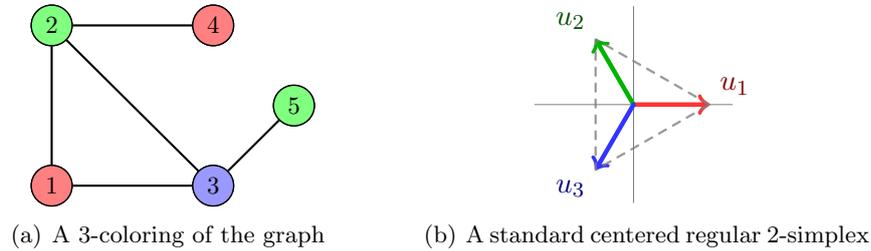
\end{example}

\begin{proposition}\label{prop:equiv}
Let $G=(V,E)$ be a graph with $n$ nodes and let $K$ be a set of $m$ colors. Consider a matrix $X\in\R^{n\times n}$ that verifies properties~\ref{(P1)}--\ref{(P3)}. Then, there exists a proper $m$-coloring $c:V\mapsto K$ such that
$$X=U_c^TU_c,$$
where $U_c$ is given by~\eqref{eq:Uc0}.
\end{proposition}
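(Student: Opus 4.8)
The plan is to read a coloring off a Gram decomposition of $X$ and then check that the combinatorial data it encodes does exactly what is required. Since $X$ is positive semidefinite by~\ref{(P1)} and has rank at most $m-1$ by~\ref{(P2)}, a spectral decomposition yields a factorization $X=U^TU$ with $U=[v_1,v_2,\ldots,v_n]\in\R^{(m-1)\times n}$ (pad with zero rows if $\rank(X)<m-1$); equivalently, there are vectors $v_1,\ldots,v_n\in\R^{m-1}$ with $\langle v_i,v_j\rangle=x_{ij}$ for all $i,j$. By~\ref{(P3)} each $v_i$ is a unit vector (because $x_{ii}=1$) and $\langle v_i,v_j\rangle\in\{1,\tfrac{-1}{m-1}\}$. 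The identity $\|v_i-v_j\|^2=2-2\langle v_i,v_j\rangle$ together with $\tfrac{-1}{m-1}\neq 1$ shows that $v_i=v_j$ precisely when $x_{ij}=1$ and $v_i\neq v_j$ precisely when $x_{ij}=\tfrac{-1}{m-1}$. Hence the relation on $V$ defined by $i\sim j\iff x_{ij}=1$ coincides with $v_i=v_j$, and is therefore an equivalence relation (reflexivity and symmetry come from~\ref{(P3)} and symmetry of $X$; transitivity is transparent once the relation is read through the $v_i$).

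Next I would bound the number of equivalence classes. Let $V_1,\ldots,V_k$ be the classes, and pick representatives giving distinct unit vectors $w_1,\ldots,w_k$, any two of which have inner product $\tfrac{-1}{m-1}$. Expanding
$$0\leq\Bigl\langle\sum_{t=1}^k w_t,\ \sum_{t=1}^k w_t\Bigr\rangle=k+k(k-1)\cdot\frac{-1}{m-1}=k\cdot\frac{m-k}{m-1}$$
forces $k\leq m$; this is exactly the computation used in~\Cref{sec:formulation} to characterize the regular simplex. Consequently the map $c:V\to K$ given by $c(i):=t$ whenever $i\in V_t$ is well defined and takes values in $\{1,\ldots,k\}\subseteq K$.

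It remains to verify the two assertions about $c$. For any $i\in V_s$ and $j\in V_t$ we have $(U_c^TU_c)_{ij}=\langle u_{c(i)},u_{c(j)}\rangle=\langle u_s,u_t\rangle$, which equals $1$ if $s=t$ and $\tfrac{-1}{m-1}$ if $s\neq t$ by the defining property of the simplex vertices $u_1,\ldots,u_m$; since $i\sim j\iff s=t$, this equals $x_{ij}$, so $X=U_c^TU_c$. Finally, $c$ is proper: if $\{i,j\}\in E$ then $x_{ij}=\tfrac{-1}{m-1}\neq 1$ by~\ref{(P3)}, hence $i\not\sim j$ and $c(i)\neq c(j)$. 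The argument is largely routine; the only step needing a genuine idea is the class count $k\leq m$, and even there the geometry recalled in~\Cref{sec:formulation} supplies the right computation. I would also note in passing that~\ref{(P2)} is not strictly necessary—$\rank(X)$ equals the rank of the Gram matrix of $w_1,\ldots,w_k$, which is at most $m-1$ once $k\leq m$—but including it makes the factorization step immediate.
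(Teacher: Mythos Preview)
Your argument is correct and shares the Gram-factorization scaffolding with the paper's proof, but the decisive step---bounding the number of distinct vectors---is handled differently. The paper assumes there are $m+1$ distinct columns, forms their $(m+1)\times(m+1)$ Gram submatrix $\widetilde{X}$, computes $\det(\widetilde{X})=\bigl(1-\tfrac{m}{m-1}\bigr)\bigl(1+\tfrac{1}{m-1}\bigr)^m\neq0$, and obtains a contradiction with $\rank(\widetilde{X})\leq\rank(X)\leq m-1$ from~\ref{(P2)}. Your route is the norm inequality $0\leq\bigl\|\sum_{t=1}^k w_t\bigr\|^2=k(m-k)/(m-1)$, which is shorter, avoids the determinant identity, and---crucially---does not invoke~\ref{(P2)} at all. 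This is what makes your closing remark legitimate: once $k\leq m$ is established from~\ref{(P1)} and~\ref{(P3)} alone, $\rank(X)$ equals the rank of the Gram matrix of $w_1,\ldots,w_k$, which is at most $m-1$, so~\ref{(P2)} is in fact a consequence of the other two properties. The paper's argument cannot reach this conclusion, since it consumes~\ref{(P2)} in the contradiction. Your final verification of $X=U_c^TU_c$ by direct entrywise comparison is also cleaner than the paper's, which identifies $Y$ with $U_c$ built from a \emph{rotated} simplex and then tacitly uses that rotations preserve Gram matrices to match~\eqref{eq:Uc0}.
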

\begin{proof}
Consider the spectral decomposition  $X=Q\Lambda Q^T,$ where $\Lambda=\diag(\lambda_1,\lambda_2,\ldots,\lambda_n)$ is the diagonal matrix of eigenvalues. Since $X$ is positive definite and has rank not greater than $m-1$, we can assume without loss of generality that $\lambda_1\geq \lambda_2\geq\cdots\geq\lambda_{m-1}\geq 0=\lambda_{m}=\cdots=\lambda_n$. Then, we can express
\begin{equation*}
X=Q\Lambda Q^T=\left(\begin{array}{cc} Q_{11} & Q_{12}\\ Q_{21} & Q_{22}\end{array}\right)\left(\begin{array}{cc} \widehat{\Lambda} & 0\\ 0 & 0\end{array}\right)\left(\begin{array}{cc} Q_{11}^T & Q_{21}^T\\ Q_{12}^T & Q_{22}^T\end{array}\right)=\left(\begin{array}{cc} Q_{11}\\ Q_{21}\end{array}\right)\widehat{\Lambda}\left(\begin{array}{cc} Q_{11}^T & Q_{21}^T\end{array}\right),
\end{equation*}
with $\widehat{\Lambda}=\diag(\lambda_1,\ldots,\lambda_{m-1})$. Hence, we can factorize $X=Y^TY$, with $Y=\widehat{\Lambda}^{\frac{1}{2}}\left(\begin{array}{cc}Q_{11}^T & Q_{21}^T\end{array}\right)$. Let $y_1,\ldots,y_n\in\R^{m-1}$ be the columns of $Y$, i.e., $Y=[y_1|y_2|\cdots|y_n]$. Observe that $y_1,\ldots,y_n$ are unit vectors because $X$ has ones on the diagonal, and thus
\begin{equation}\label{eq:prop_dot_prod}
\langle y_i,y_j \rangle=\left\{\begin{array}{cl} 1, & \text{ if } y_i=y_j;\\ \frac{-1}{m-1}, & \text{ if } y_i\neq y_j;\end{array}\right.
\end{equation}
for all $i,j=1,\ldots,n$. Let us show now that there are at most $m$ distinct vectors among them. To this aim, suppose that $y_{i_1},y_{i_2},\ldots,y_{i_{m+1}}$ are $m+1$ different vectors. Consider
\begin{equation*}\def\arraystretch{1.2}
\widetilde{X}:=\left[\begin{array}{c} y_{i_1}^T\\ y_{i_2}^T\\ \vdots\\ y_{i_{m+1}}^T\end{array}\right]\left[y_{i_1} | y_{i_2} | \cdots | y_{i_{m+1}}\right]=\left(\begin{array}{cccc} 1 & \frac{-1}{m-1} & \cdots & \frac{-1}{m-1}\\
\frac{-1}{m-1}  & 1 & \cdots & \frac{-1}{m-1}\\
\vdots & \vdots & \ddots & \vdots\\
\frac{-1}{m-1} & \frac{-1}{m-1} & \cdots & 1\end{array}\right)\in\R^{(m+1)\times(m+1)}.
\end{equation*}
It holds that $\rank(\widetilde{X})\leq\rank(X)\leq m-1$, since $\widetilde X$ is a submatrix of $X$, but this is a contradiction with the fact that
\begin{equation*}
\det(\widetilde{X})=\left(1-\frac{m}{m-1}\right)\left(1+\frac{1}{m-1}\right)^m\neq 0.
\end{equation*}
Therefore, it must hold that $\cup_{j=1}^n\{y_j\}=\{u_1,\ldots,u_r\}$, where $u_1,\ldots,u_r$ are $r\leq m$ distinct vertices of a regular $(m-1)$-simplex (a rotation of the standard simplex). Finally, define $c:V\mapsto K$ by $c(i)=\left\{k\in\{1,\ldots,r\}\,:\, y_i=u_k\right\}$, so that we trivially get $Y=U_c$, where $U_c$ is as in \eqref{eq:Uc0}. According to~\ref{(P3)}, together with~\eqref{eq:prop_dot_prod}, we have that $c$ is a proper $m$-coloring of $G$, as claimed.
\end{proof}

In view of~\Cref{prop:equiv}, finding a proper $m$-coloring of a graph with $n$ vertices is equivalent to finding an $n\times n$ matrix verifying properties~\ref{(P1)},~\ref{(P2)} and~\ref{(P3)}. In this work, the latter will be tackled by solving the following feasibility problem:

\begin{equation}\label{eq:formulation}
   \text{Find }X\in C_1\cap C_2 \subseteq \R^{n\times n},
\end{equation}
where the constraint sets are defined by
\begin{subequations}\label{eq:constraints}
 	\begin{align}
 	C_1  &:= \left\{
 	X\in\left\{1,\frac{-1}{m-1}\right\}^{n\times n}: x_{ii}=1, \forall i\in  V \text{ and }  x_{ij}=\frac{-1}{m-1}, \forall \{i,j\}\in E
 	\right\}, \label{eq:C1} \\
 	C_2  &:= \left\{ X\in\sdp^n: \rank(X)\leq m-1 \right\}. \label{eq:C2}
 	\end{align}
\end{subequations}

\begin{remark}\label{rmk:equiv_sol}
One advantage of the feasibility problem~\eqref{eq:formulation} is the avoidance of equivalent colorings in the following sense. Suppose that $c:V\mapsto K$ is a proper $m$-coloring of a graph~$G$, and let $W_c$ be its associated matrix given by~\eqref{eq:Uc}. For any permutation of the colors, $\sigma:K\mapsto K$, we have that $\sigma\circ c$ is also a proper $m$-coloring of $G$, so there exist many equivalent valid colorings. However, observe that $W_{\sigma\circ c}=W_c$, and thus all of them lead to a unique solution of~\eqref{eq:formulation}.
\end{remark}

\subsection{Modeling precoloring problems}\label{sec:precol_model}

The \emph{precoloring problem} consists in obtaining a proper coloring of a graph in which the color of some nodes are predefined. Let $\widetilde{V} \subseteq V$ be the subset of precolored nodes and denote by $p_i\in K$ the preassigned color to node $i\in\widetilde{V}$. The task then is to find a coloring $c:V\mapsto K$ such that
\begin{equation}\label{eq:precol}
c(i)\neq c(j), \text{ for all } \{i,j\}\in E \quad and \quad c(i)=p_i, \text{ for all } i\in\widetilde{V}.
\end{equation}
Notice that any coloring satisfying~\eqref{eq:precol} also verifies
\begin{equation}\label{eq:precol_refor}
c(i)\neq c(j), \text{ for all } \{i,j\}\in E \quad and \quad c(i)=c(j) \Leftrightarrow p_i=p_j,\text{ for all } i,j\in\widetilde{V}.
\end{equation}
In fact, both conditions can be shown to be equivalent in the following sense. Suppose that $c:V\mapsto K$ is a coloring verifying~\eqref{eq:precol_refor}. Then, for any permutation of the colors $\sigma:K\mapsto K$ such that $\sigma(c(i))=p_i$ for all $i\in\widetilde{V}$, one can easily check that $\sigma\circ c$ is a proper coloring for which~\eqref{eq:precol} holds.

Therefore, we shall focus on finding colorings fulfilling condition~\eqref{eq:precol_refor}. The matrix~$W_c$ constructed from $c$ as in~\eqref{eq:Uc}, shall verify now~\ref{(P1)},~\ref{(P2)} and
\begin{itemize}
\item[\mylabel{(P3m)}{({P3'})}] $W_c\in\left\{1,\frac{-1}{m-1}\right\}^{n\times n}$ and some of the entries of $W_c=[w_{ij}]$ are determined as follows:
\begin{gather*}
w_{ij}=1,\ \forall \{i,j\}\in \widehat{I} \quad\text{and}\quad w_{ij}=\frac{-1}{m-1},\ \forall \{i,j\}\in \widehat{E};
\end{gather*}
where $\widehat{I}:=\left\{\{i,i\}:i\in V\right\}\cup\left\{\{i,j\}\subseteq\widetilde{V}: p_i=p_j \right\}$ and $\widehat{E}:=E\cup\left\{ \{i,j\}\subseteq\widetilde{V}: p_i\neq p_j\right\}$.
\end{itemize}
The new modified property \ref{(P3m)} can be incorporated into the formulation of the feasibility problem~\eqref{eq:formulation} by replacing the constraint $C_1$ by
\begin{equation}\label{eq:C1precol}
\widehat{C}_1 := \left\{
 	X\in\left\{1,\frac{-1}{m-1}\right\}^{n\times n}: x_{ij}=1, \forall \{i,j\}\in  \widehat{I} \text{ and }  x_{ij}=\frac{-1}{m-1}, \forall \{i,j\}\in \widehat{E}
 	\right\}.
\end{equation}

\begin{example}[\Cref{example} revisited]\label{example2}
Consider the graph in~\Cref{example} and suppose that node $2$ is precolored red (R), and nodes $4$ and $5$ are precolored blue (B). The precoloring problem is shown in~\Cref{fig:example2_a}. Following the notation established above, we have
$$\widehat{I}=\left\{\{i,i\}: i\in V\right\}\cup \left\{\{4,5\}\right\} \quad \text{and}\quad \widehat{E}=E\cup \left\{\{2,4\},\{2,5\}\right\}=E\cup \left\{\{2,5\}\right\}.$$
The unique solution to the feasibility problem $\widehat{C}_1\cap C_2$ is the matrix
{\def\arraystretch{1.7}
\begin{equation*}
W_c=\left(\begin{array}{ccccc} \boxed{1} & \boxed{-0.5} & \boxed{-0.5} & 1 & 1\\
\boxed{-0.5} & \boxed{1} & \boxed{-0.5} & \boxed{-0.5} & \boxed{\boxed{-0.5}}\\
\boxed{-0.5} & \boxed{-0.5} & \boxed{1} & -0.5 & \boxed{-0.5}\\
1 & \boxed{-0.5} & -0.5 & \boxed{1} & \boxed{\boxed{1}}\\
1 & \boxed{\boxed{-0.5}} & \boxed{-0.5} & \boxed{\boxed{1}} & \boxed{1} \end{array}\right),
\end{equation*}}%
where the boxed entries in $W_c$ correspond to those determined by~\ref{(P3m)}. The entries whose values are fixed by~\ref{(P3m)} but not by~\ref{(P3)} are marked with a double-box.

Suppose that we obtain the factorization $W_c=U_c^TU_c$, with $U_c=\left[u_1,u_2,u_3,u_1,u_1\right]$. The $3$-coloring determined by $U_c$ is represented in~\Cref{fig:example2_b}. Then, in order to make this coloring consistent with the precoloring of the vertices, we need to  suitably permute the set of colors. Precisely, we require $\sigma(\text{G})=\text{R}$ and $\sigma(\text{R})=\text{B}$. It must therefore be $\sigma(\text{B})=\text{G}$. The permuted $3$-coloring consistent with the precoloring, given by ${U}_{\sigma\circ c}=[u_3,u_1,u_2,u_3,u_3]$, is shown in~\Cref{fig:example2_c}.

\begin{figure}[ht!]
	\centering
	\subfigure[Precolored graph\label{fig:example2_a}]{\hspace{1ex}
	\begin{tikzpicture}[scale=.85,transform shape]%
		\def\sep{1.25}
		\GraphInit[vstyle=Normal]
		\SetVertexNoLabel
		\tikzstyle{EdgeStyle}=[thick]
		\coordinate (A) at (-5,0);
		\coordinate (B) at (5,0);
		\Vertex[x=-\sep, y=-\sep] {c0}
		\Vertex[x=-\sep, y=\sep] {c1}
		\Vertex[x=\sep, y=-\sep] {c2}
		\Vertex[x=\sep, y=\sep] {c3}
		\Vertex[x=2*\sep, y=0] {c4}
		\AddVertexColor{blue!40}{c4,c3}
		\AddVertexColor{red!50}{c1}
		\AddVertexColor{green!50}{}
		\AssignVertexLabel{c}{$1$,$2$,$3$,$4$,$5$}
		\Edges(c0,c1,c2,c0)
		\Edges(c1,c3)
		\Edges(c2,c4)
	\end{tikzpicture}\hspace{1ex}}\qquad
	\subfigure[A $3$-coloring of the graph\label{fig:example2_b}]{\hspace{1ex}
	\begin{tikzpicture}[scale=.85,transform shape]%
		\def\sep{1.25}
		\GraphInit[vstyle=Normal]
		\SetVertexNoLabel
		\tikzstyle{EdgeStyle}=[thick]
		\coordinate (A) at (-5,0);
		\coordinate (B) at (5,0);
		\Vertex[x=-\sep, y=-\sep] {c0}
		\Vertex[x=-\sep, y=\sep] {c1}
		\Vertex[x=\sep, y=-\sep] {c2}
		\Vertex[x=\sep, y=\sep] {c3}
		\Vertex[x=2*\sep, y=0] {c4}
		\AddVertexColor{green!50}{c1}
		\AddVertexColor{red!50}{c0,c3,c4}
		\AddVertexColor{blue!40}{c2}
		\AssignVertexLabel{c}{$1$,$2$,$3$,$4$,$5$}
		\Edges(c0,c1,c2,c0)
		\Edges(c1,c3)
		\Edges(c2,c4)
	\end{tikzpicture}\hspace{1ex}}\qquad\subfigure[A permutation consistent with the precoloring\label{fig:example2_c}]{\hspace{1ex}
	\begin{tikzpicture}[scale=.85,transform shape]%
		\def\sep{1.25}
		\GraphInit[vstyle=Normal]
		\SetVertexNoLabel
		\tikzstyle{EdgeStyle}=[thick]
		\coordinate (A) at (-5,0);
		\coordinate (B) at (5,0);
		\Vertex[x=-\sep, y=-\sep] {c0}
		\Vertex[x=-\sep, y=\sep] {c1}
		\Vertex[x=\sep, y=-\sep] {c2}
		\Vertex[x=\sep, y=\sep] {c3}
		\Vertex[x=2*\sep, y=0] {c4}
		\AddVertexColor{green!50}{c2}
		\AddVertexColor{blue!40}{c3,c4,c0}
		\AddVertexColor{red!50}{c1}
		\AssignVertexLabel{c}{$1$,$2$,$3$,$4$,$5$}
		\Edges(c0,c1,c2,c0)
		\Edges(c1,c3)
		\Edges(c2,c4)
	\end{tikzpicture}\hspace{1ex}}
	\caption{Graphical representation of~\Cref{example2}}\label{fig:example2}
\end{figure}
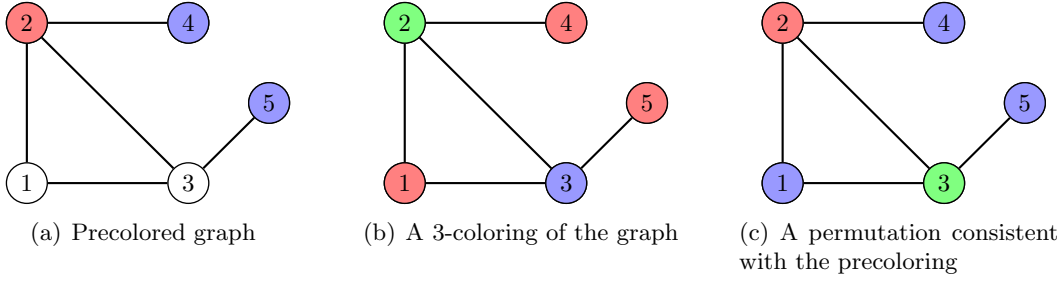
\end{example}

\section{Implementation of the Douglas--Rachford algorithm}\label{sec:implementation}

In order to apply the DR algorithm to feasibility problems, and \eqref{eq:formulation} in particular, it must be possible to efficiently compute the projections onto the two constraint sets, in our case~\eqref{eq:constraints}. This is indeed the case, as shown in the following results.

\begin{proposition}[Projection onto $C_1$]\label{prop:PC1} Consider any $X=(x_{ij})\in\R^{n\times n}$. A projection of $X$ onto the set $C_1$ defined in~\eqref{eq:C1} is given componentwise by
\begin{equation}\label{eq:PC1}
\Big(\pi_{C_1}(X)\Big)[i,j]=\left\{\begin{array}{ll} 1,&\text{if } x_{ij}>\frac{m-2}{2(m-1)} \text{ and } \{i,j\}\not\in E,  \text{ or } i=j;\\
\frac{-1}{m-1} &,\text{if } x_{ij}\leq\frac{m-2}{2(m-1)}  \text{ and } i\neq j, \text{ or } \{i,j\}\in E.\end{array}\right.
\end{equation}
A projection of $X$ onto the set $\widehat{C}_1$ in~\eqref{eq:C1precol} is given componentwise by
\begin{equation}\label{eq:PC1hat}
\Big(\pi_{\widehat{C}_1}(X)\Big)[i,j]=\left\{\begin{array}{ll} 1,&\text{if } x_{ij}>\frac{m-2}{2(m-1)} \text{ and } \{i,j\}\not\in \widehat{E},  \text{ or } \{i,j\}\in \widehat{I};\\
\frac{-1}{m-1}, &\text{if } x_{ij}\leq\frac{m-2}{2(m-1)}  \text{ and } \{i,j\}\not\in \widehat{I}, \text{ or } \{i,j\}\in \widehat{E}.\end{array}\right.
\end{equation}
\end{proposition}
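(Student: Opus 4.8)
The plan is to exploit the fact that $C_1$ is a \emph{box-type} set: the constraints defining it act on the entries of the matrix independently. For each ordered pair $(i,j)\in V\times V$ introduce the one-dimensional set
\[
S_{ij}:=\begin{cases} \{1\}, & i=j,\\[2pt] \left\{\tfrac{-1}{m-1}\right\}, & \{i,j\}\in E,\\[2pt] \left\{1,\tfrac{-1}{m-1}\right\}, & \text{otherwise}.\end{cases}
\]
Then $X=(x_{ij})\in C_1$ if and only if $x_{ij}\in S_{ij}$ for every $(i,j)$; in particular $C_1$ is a nonempty finite set (no self-loop being present, the forced values never conflict), hence closed, and $\pi_{C_1}$ is well defined. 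Since the Frobenius norm splits as $\|X-Y\|_F^2=\sum_{i,j}(x_{ij}-y_{ij})^2$, minimizing $\|X-Y\|_F$ over $Y\in C_1$ reduces to minimizing each term $(x_{ij}-y_{ij})^2$ over $y_{ij}\in S_{ij}$ separately, so $\pi_{C_1}(X)[i,j]$ can be taken to be any element of $P_{S_{ij}}(x_{ij})$.

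It then remains to compute the projection onto each $S_{ij}$. When $S_{ij}$ is a singleton the projection is that single value, which accounts for the ``forced'' alternatives $i=j$ (value $1$) and $\{i,j\}\in E$ (value $\tfrac{-1}{m-1}$) in~\eqref{eq:PC1}. When $S_{ij}=\left\{1,\tfrac{-1}{m-1}\right\}$, the nearest point to a scalar $t$ is determined by comparison with the midpoint
\[
\frac12\left(1+\frac{-1}{m-1}\right)=\frac{m-2}{2(m-1)}:
\]
if $t>\frac{m-2}{2(m-1)}$ the unique closest value is $1$; if $t<\frac{m-2}{2(m-1)}$ it is $\tfrac{-1}{m-1}$; and if $t=\frac{m-2}{2(m-1)}$ both are equidistant, so either choice is a legitimate projection. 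Selecting $\tfrac{-1}{m-1}$ in this tie case (as the displayed formula does) yields a valid $\pi_{C_1}(X)$, and collecting the cases — with the forced entries taking precedence over the threshold test, which is exactly what the ``or'' clauses encode — gives~\eqref{eq:PC1}.

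For $\widehat C_1$ the argument is verbatim the same after redefining the forced entries: set $S_{ij}=\{1\}$ when $\{i,j\}\in\widehat I$, $S_{ij}=\left\{\tfrac{-1}{m-1}\right\}$ when $\{i,j\}\in\widehat E$, and $S_{ij}=\left\{1,\tfrac{-1}{m-1}\right\}$ otherwise. Here one should first note that $\widehat I\cap\widehat E=\emptyset$ (diagonal pairs and equal-precolor pairs are never edges or unequal-precolor pairs), provided the precoloring forces no contradiction, so the $S_{ij}$ are well defined and $\widehat C_1$ is again a nonempty finite set; the same separability reduction and the same midpoint threshold $\frac{m-2}{2(m-1)}$ then produce~\eqref{eq:PC1hat}. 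I do not expect any genuine analytic obstacle in this proof: the only points requiring a little care are checking that the norm truly separates over entries (immediate from the Frobenius formula), verifying that the ``or'' clauses correctly implement the priority of forced entries over the threshold comparison, and fixing the tie-breaking convention at $t=\frac{m-2}{2(m-1)}$.
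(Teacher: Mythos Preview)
Your proposal is correct and follows essentially the same approach as the paper's own proof: both exploit the separability of the Frobenius norm to reduce the projection onto $C_1$ (resp.\ $\widehat C_1$) to independent one-dimensional projections, handle the forced entries as singletons, and decide the free entries by comparison with the midpoint $\frac{m-2}{2(m-1)}$. Your write-up is simply more explicit about the box-type decomposition, the tie-breaking at the midpoint, and the well-definedness of $\widehat C_1$; the paper compresses all of this into a couple of sentences and declares the $\widehat C_1$ case ``analogous''.
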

\begin{proof}
Clearly, the projector of $X$ onto $C_1$ can be computed componentwise. Taking into account the constraints in~\eqref{eq:C1}, the projection of an entry $x_{ij}$ is $1$ if $i=j$, and is $\frac{-1}{m-1}$ if $\{i,j\}\in E$. Otherwise, it is equal to $P_{\left\{1,\frac{-1}{m-1}\right\}}(x_{ij})$. As the middle point between these two values is $\frac{m-2}{2(m-1)}$, then~\eqref{eq:PC1} follows. The proof of~\eqref{eq:PC1hat} is analogous.
\end{proof}

\begin{proposition}[Projection onto $C_2$]\label{prop:PC2} Let $X\in\Sy^n$ and consider its spectral decomposition $X=Q\Lambda Q^T$, with $\Lambda=\diag(\lambda_1,\ldots,\lambda_n)$ and $\lambda_1\geq\lambda_2\geq\cdots\geq\lambda_n$. A projection of $X$ onto the set $C_2$ defined in~\eqref{eq:C2} is given componentwise by
\begin{equation}\label{eq:PC2}
\pi_{C_2}(X)=Q\Lambda^{+}_{m-1}Q^T,
\end{equation}
where $\Lambda^{+}_{m-1}=\diag\left(\max\{0,\lambda_1\},\ldots,\max\{0,\lambda_{m-1}\},0,\ldots,0 \right)$.
\end{proposition}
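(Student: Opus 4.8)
The plan is to turn the matrix nearest-point problem into a separable minimization over eigenvalues, the key tool being the von Neumann trace inequality for symmetric matrices. Since $C_2\subseteq\Sy^n$ and $X\in\Sy^n$, computing $P_{C_2}(X)$ amounts to minimizing $\|X-Y\|_F^2$ over all $Y\in\sdp^n$ with $\rank(Y)\le m-1$. First I would verify that the candidate $Z:=Q\Lambda^{+}_{m-1}Q^T$ lies in $C_2$: its eigenvalues are $\max\{0,\lambda_i\}\ge 0$ for $i\le m-1$ and $0$ for $i\ge m$, so $Z\in\sdp^n$ and $\rank(Z)\le m-1$. Since $X-Z=Q\bigl(\Lambda-\Lambda^{+}_{m-1}\bigr)Q^T$, orthogonal invariance of the Frobenius norm gives
\[
\|X-Z\|_F^2=\sum_{i=1}^{m-1}\bigl(\lambda_i-\max\{0,\lambda_i\}\bigr)^2+\sum_{i=m}^{n}\lambda_i^2=\sum_{i=1}^{m-1}\min\{0,\lambda_i\}^2+\sum_{i=m}^{n}\lambda_i^2.
\]

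Next comes the crucial lower bound. Let $Y\in C_2$ be arbitrary, with eigenvalues $\mu_1\ge\cdots\ge\mu_n\ge 0$ (so $\mu_i=0$ for $i\ge m$). Using $\|X\|_F^2=\sum_i\lambda_i^2$, $\|Y\|_F^2=\sum_i\mu_i^2$, and the von Neumann trace inequality $\tr(XY)\le\sum_{i=1}^n\lambda_i\mu_i$ (equivalently, the Hoffman--Wielandt inequality for symmetric matrices), we obtain
\[
\|X-Y\|_F^2=\|X\|_F^2+\|Y\|_F^2-2\tr(XY)\ge\sum_{i=1}^n(\lambda_i-\mu_i)^2=\sum_{i=1}^{m-1}(\lambda_i-\mu_i)^2+\sum_{i=m}^n\lambda_i^2 .
\]
Because the constraints decouple --- each $\mu_i$, $i\le m-1$, is free in $[0,\infty)$ --- and $\min_{t\ge 0}(\lambda_i-t)^2=\min\{0,\lambda_i\}^2$ (attained at $t=\max\{0,\lambda_i\}$), the last expression is bounded below by $\sum_{i=1}^{m-1}\min\{0,\lambda_i\}^2+\sum_{i=m}^n\lambda_i^2=\|X-Z\|_F^2$. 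Hence $Z$ attains the minimum distance, i.e.\ $Z\in P_{C_2}(X)$, which is exactly~\eqref{eq:PC2}.

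The only nonroutine ingredient is the von Neumann trace inequality (equivalently, the fact that $\|X-Y\|_F^2\ge\sum_i(\lambda_i(X)-\lambda_i(Y))^2$ when both lists of eigenvalues are sorted decreasingly); once that is in hand, everything reduces to the elementary scalar fact $\min_{t\ge 0}(\lambda-t)^2=\min\{0,\lambda\}^2$. I would also remark that the projection is in general not unique --- ties among the $\lambda_i$ near the truncation threshold (for instance $\lambda_{m-1}=\lambda_m$, or several equal eigenvalues straddling $0$) permit other minimizers obtained by a different choice of orthonormal eigenbasis --- which is why the statement claims only that $Q\Lambda^{+}_{m-1}Q^T$ \emph{is a} projection, consistent with $C_2$ being nonconvex.
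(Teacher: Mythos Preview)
Your argument is correct. The paper itself does not supply a proof but simply cites \cite[Proposition~3.11]{T17}; your route via the von~Neumann/Hoffman--Wielandt eigenvalue inequality is the standard way such results are established and is almost certainly what underlies the cited reference. One small point you could make explicit for completeness: when you minimize $\sum_{i=1}^{m-1}(\lambda_i-\mu_i)^2$ you treat the $\mu_i$ as independent nonnegative reals, whereas a priori they are the ordered eigenvalues $\mu_1\ge\cdots\ge\mu_{m-1}\ge 0$ of some admissible $Y$. This causes no trouble, since you only need a lower bound at that stage (relaxing the ordering constraint can only decrease the infimum), and the minimizers $\mu_i=\max\{0,\lambda_i\}$ happen to respect the ordering anyway because $\lambda_1\ge\cdots\ge\lambda_{m-1}$.
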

\begin{proof}
See, e.g., \cite[Proposition~3.11]{T17}.
\end{proof}

\begin{remark}
According to \Cref{prop:PC1,prop:PC2}, computing a projection onto $C_1$ is a simple rounding operation, while a projection onto $C_2$ requires the computation of the spectral decomposition of an $n\times n$ matrix. From a computational point of view, the former is not a problem but the later may be time-consuming, especially for big problems. However, observe that we do not need to compute the whole spectrum in~\eqref{eq:PC2}, but only the $m-1$ largest eigenvalues and their associated eigenvectors. In large-scale problems, $m$ is usually much smaller than~$n$ and hence $\pi_{C_2}$ can be computed reasonably fast.

Constraint non-convexity manifests itself in the equality case of the conditionals in \Cref{prop:PC1}, and the case of degenerate eigenvalues in \Cref{prop:PC2}. Neither of these can be acted upon in practice, given the finite precision of the computations.
\end{remark}

\begin{remark}\label{rem:sym}
In order to find $\pi_{C_2}(X)$, \Cref{prop:PC2} requires the matrix $X$ to be symmetric. Observe that, according to~\eqref{eq:PC1} and by definition of $C_2$, we get that
\begin{equation*}
\pi_{C_1}(X), \pi_{C_2}(X)\in\Sy^n, \quad \text{ for all } X\in\Sy^n.
\end{equation*}
Hence, since $\Sy^n$ is a subspace, the iterates generated by DR~\eqref{eq:NonconvexDR} will remain symmetric (with due attention to numerical precision), as long as the initial point is chosen in $\Sy^n$.
\end{remark}

There are several options for implementing the DR algorithm. The simplest choice would be to directly apply DR in the original
space $\R^{n\times n}$, since the feasibility problem to be solved~\eqref{eq:formulation} only involves two constraint sets.
Then, we can iterate by using either $T_{C_1,C_2,\lambda}$ or $T_{C_2,C_1,\lambda}$.
On the other hand, although the product space reformulation is typically employed for feasibility problems involving more than two sets, it can still be applied to two sets.
In this way, we obtain two additional implementations by either using the operator $T_{D,C,\lambda}$ or $T_{C,D,\lambda}$.
The purpose of the next section is to numerically compare these different implementations. In our numerical tests we observed that the numerical behavior of $T_{D,C,\lambda}$ and $T_{C,D,\lambda}$
is similar; thus, to simplify, we only show the results for the operator $T_{D,C,\lambda}$, whose \emph{shadow} $P_D\circ T_{D,C,\lambda}$ is easier to track, as it can be identified with a sequence in the original space $\R^{n\times n}$.

\section{Numerical experiments}\label{sec:numexp}

In this section we run various numerical experiments to test the performance of the DR algorithm for solving different graph coloring problems. We compare the formulation discussed in Section~\ref{sec:formulation} with the one recently proposed in~\cite{AC18}. To distinguish them, we shall refer to the model proposed in~\cite{AC18} as the \emph{binary formulation}, and to the new one developed in~\Cref{sec:formulation} as the \emph{rank formulation}.

In each of the next five subsections we run illustrative experiments on different families of graphs: the Queens\_$n^2$ puzzles, random colorable graphs, the windmill graphs, Sudokus, and the DIMACS benchmark instances. Each of these families is employed for a different purpose. We start with a difficult coloring problem, the Queens\_$n^2$ puzzle, where we show the effect that the parameter $\lambda$ has in the different implementations. We also use these puzzles to draw attention to something that is usually overlooked: finite machine precision. Next, to test how the method scales, we run an experiment on random colorable graphs with controlled asymptotic complexity. The windmill graphs and the Sudoku puzzles are used to show that the rank formulation is superior to the binary formulation, even when we allow maximal clique information. We finish this experimental section by testing the algorithm on the DIMACS benchmark instances, a widely used collection of diverse graph types.

Unless otherwise stated, the stopping criterion used for the implementation $T_{A,B,\lambda}$ was
\begin{equation}\label{eq:stop}
{\rm Error}_{A,B}(x_k):=\left\|P_B(P_A(x_k))-P_A(x_k)\right\|\leq 10^{-10},
\end{equation}
where $x_k$ is the current iterate, in which case the solution attempt was labeled as successful. All codes were written in Python~2.7 and the tests were run on an Intel Core i7-4770 CPU \@3.40GHz with 32GB RAM, under Windows~10 (64-bit).

\subsection{Queens\_$n^2$ puzzles}

A well-known and challenging graph coloring problem is the Queens\_$n^2$ puzzle.
This puzzle consists in covering the entire $n\times n$ chessboard with queens of different colors, so that two queens of the same color do not attack each other.
The puzzle is equivalent to finding a proper coloring of a particular graph, which has a vertex at each cell of the chessboard and edges between all pairs of vertices (cells) that lie on the same column, row or diagonal.
In~\Cref{tbl:nQ_chrom} we give the chromatic number of the graph for the first nine puzzles. The smallest open case for which the chromatic number is currently unknown is $n=27$, see~\cite{OEIS}.

\begin{table}[ht!]
\centering
\begin{tabular}{|c|c|c|c|c|c|c|c|c|c|c|}
\hline
$n$ & 2 & 3 & 4 & 5 & 6 & 7 & 8 & 9 & 10 \\
\hline
$\chi(n)$ & 4 & 5 & 5 & 5 & 7 & 7 & 9 & 10 & 11 \\
\hline
\end{tabular}
\caption{Chromatic number $\chi(n)$ of the Queens\_$n^2$ graph~\cite{OEIS}}
\label{tbl:nQ_chrom}
\end{table}

In our first experiment, we analyze how both the implementation and the choice in the relaxation parameter $\lambda$ affects the behavior of DR for solving this type of puzzle. For each $n\in\{3,4,\ldots,10\}$ and each $\lambda\in\{0.25, 0.5,\ldots,1.75\}$, we ran three different implementations of DR (namely, $T_{C_1,C_2,\lambda}$, $T_{C_2,C_1,\lambda}$ and $T_{D,C,\lambda}$) from $10$ random starting points. The results are shown in~\Cref{fig:nqueens}, where the markers correspond to the median among the solved instances. We also show the percentage of instances solved for each value of $\lambda$, among all the problems and repetitions. According to these results, it seems that the value of the parameter $\lambda$ that suits best each of the formulations $T_{C_1,C_2,\lambda}$, $T_{C_2,C_1,\lambda}$ and $T_{D,C,\lambda}$, is $\lambda=0.75$, $\lambda=0.5$ and $\lambda=1$, respectively. To corroborate this conclusion, we visualize the results using performance profiles, which are constructed as follows (see~\cite{DM02} and the modification proposed in~\cite{ISU16}).

\paragraph{Performance profile}
Let $\Phi$ denote the set of formulations, and let $\mathcal{P}$ be a set of $N$ problems. For each formulation $f\in\Phi$, let $t_{f,p}$ be the averaged time required by DR to solve problem $p\in\mathcal{P}$ among all the successful runs, and let $s_{f,p}$ denote the fraction of successful runs for problem $p$. Compute $t^\star_p:=\min_{f\in\Phi} t_{f,p}$ for all $p\in\mathcal{P}$. Then, for any $\tau\geq 1$, define $R_f(\tau):=\{p\in\mathcal{P}, t_{f,p}\leq\tau t^\star_p\}$; i.e., $R_f(\tau)$ is the set of problems for which formulation $f$ is at most $\tau$ times slower than the best one. The \emph{performance profile} function of formulation $f$ is given by
\begin{equation*}
\begin{array}{rccl}
\rho_f: & [1,+\infty) & \longmapsto & [0,1]\\
& \tau &  \mapsto & \rho_f(\tau):=\frac{1}{N}\sum_{p\in R_f(\tau)} s_{f,p}.
\end{array}
\end{equation*}
The value $\rho_f(1)$ indicates the portion of runs for which $f$ was the fastest formulation. When $\tau\rightarrow+\infty$, then $\rho_f(\tau)$ gives the fraction of successful runs for formulation $f$.\\

We show in~\Cref{fig:nqueensPP} the performance profiles for each value of $\lambda$ and each of the three implementations $T_{C_1,C_2,\lambda}$, $T_{C_2,C_1,\lambda}$ and $T_{D,C,\lambda}$. This corroborates our previous choice of best parameters $\lambda$ for each implementation. Finally, we compare $T_{C_1,C_2,0.75}$, $T_{C_2,C_1,0.5}$ and $T_{D,C,1}$ in~\Cref{fig:nqueensPP_global}, where we can clearly observe that the first implementation dominates the others.

\begin{figure}[h!]
\addtocounter{subfigure}{-1}
	\centering
	\subfigure{\includegraphics[width=0.67\linewidth]{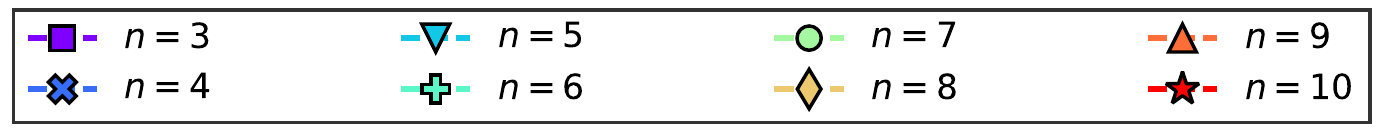}}
	\subfigure[DR implemented with $T_{C_1,C_2,\lambda}$]{\includegraphics[width=0.67\linewidth]{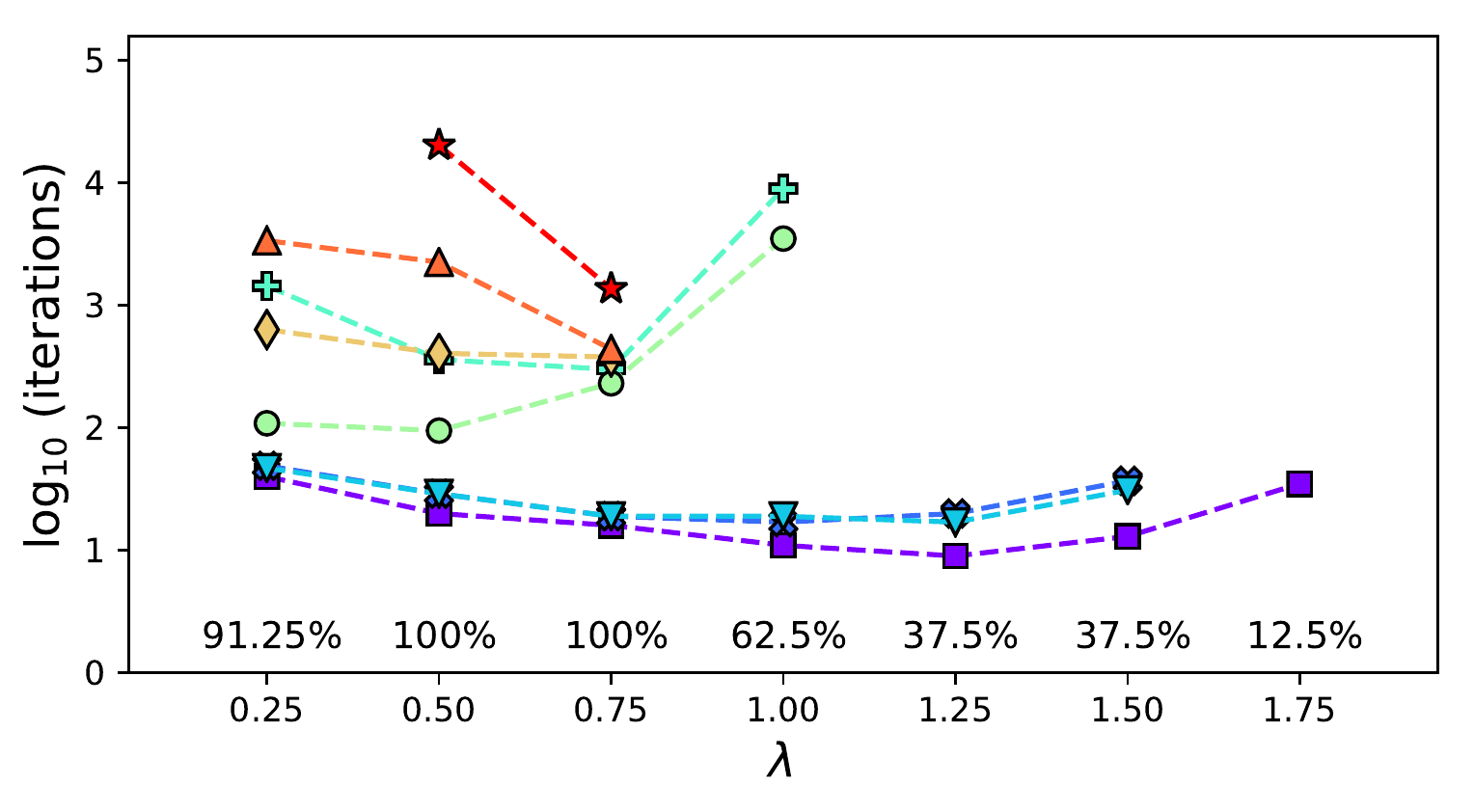}}
	\subfigure[DR implemented with $T_{C_2,C_1,\lambda}$]{\includegraphics[width=0.67\linewidth]{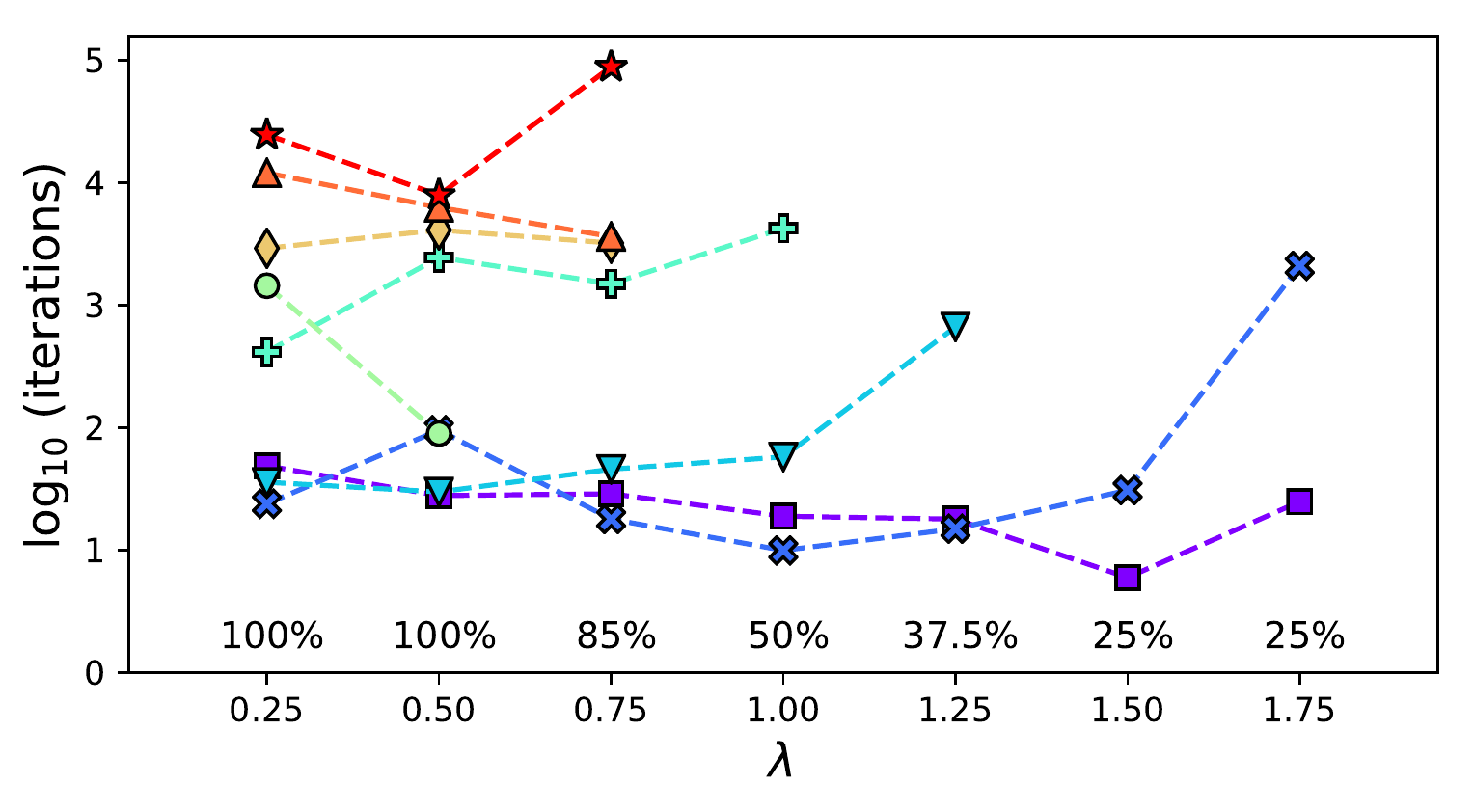}}
	\subfigure[DR implemented with $T_{D,C,\lambda}$]{\includegraphics[width=0.67\linewidth]{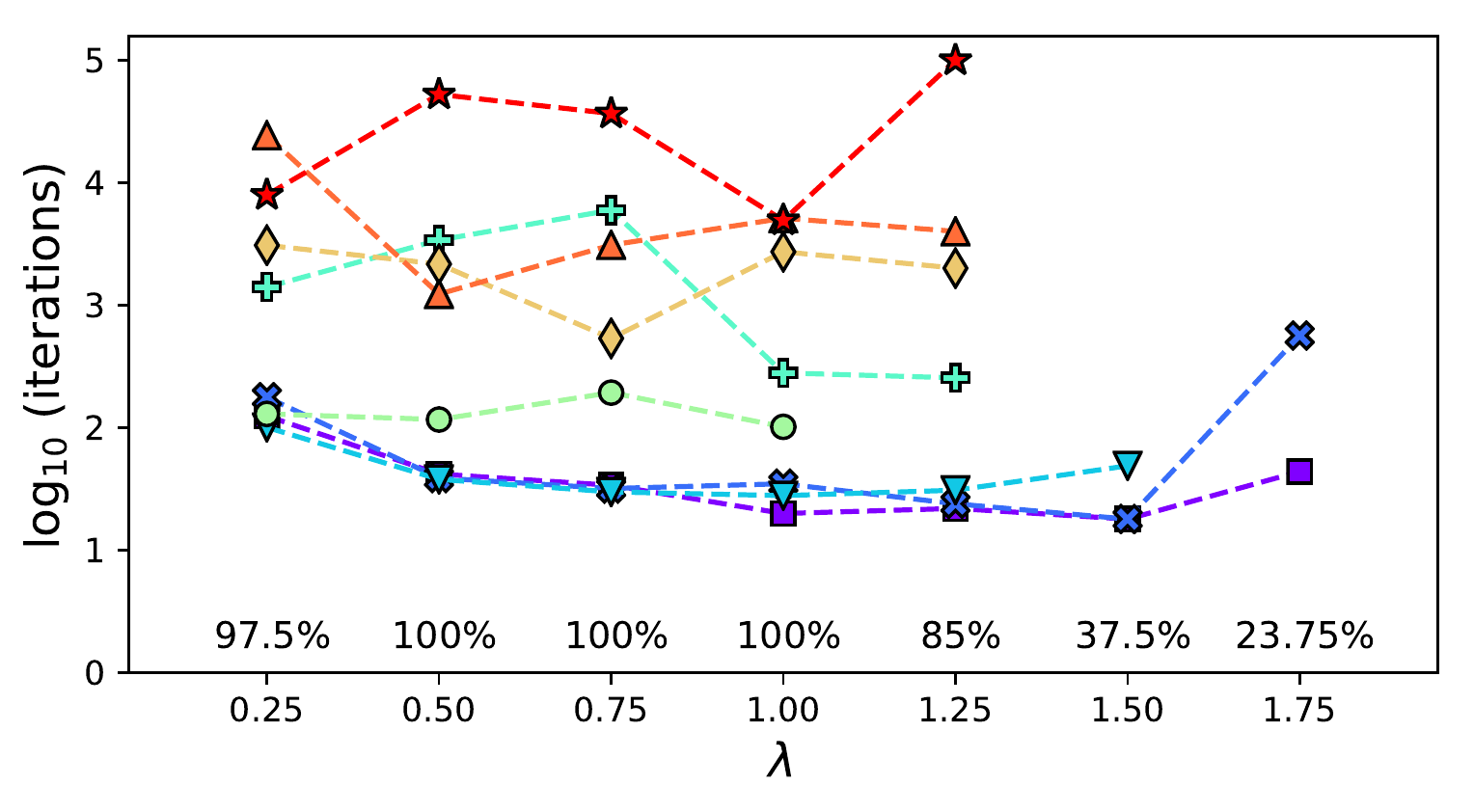}}
	\caption{Results of the Queens\_$n^2$ experiment for three implementations of DR. Each marker corresponds to the median of the solved instances
among $10$ random starting points. At the bottom of each graph, we show the percentage of solved instances for each value of $\lambda$. Instances were considered
as unsolved after 100,000 iterations} \label{fig:nqueens}
\end{figure}

\begin{figure}[h!]
\addtocounter{subfigure}{-1}
	\centering
	\subfigure{\includegraphics[width=0.97\linewidth]{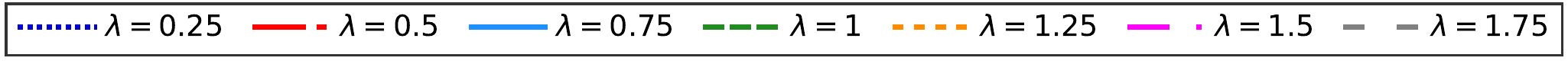}}
	\subfigure[DR implemented with $T_{C_1,C_2,\lambda}$]{\includegraphics[width=0.325\linewidth]{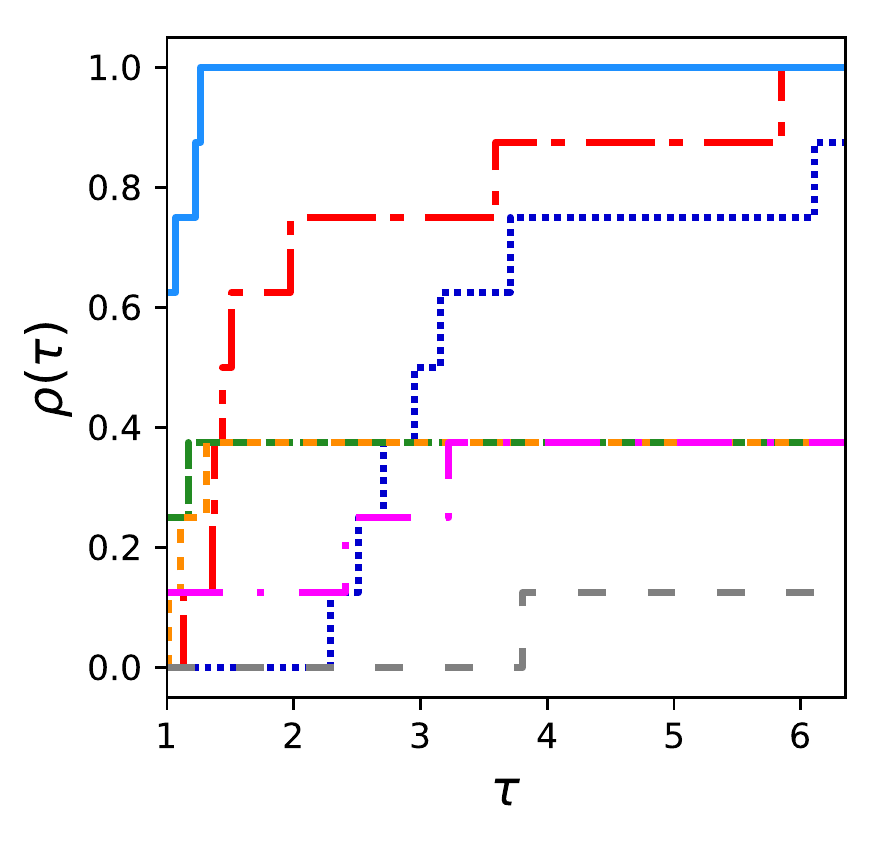}}
	\subfigure[DR implemented with $T_{C_2,C_1,\lambda}$]{\includegraphics[width=0.325\linewidth]{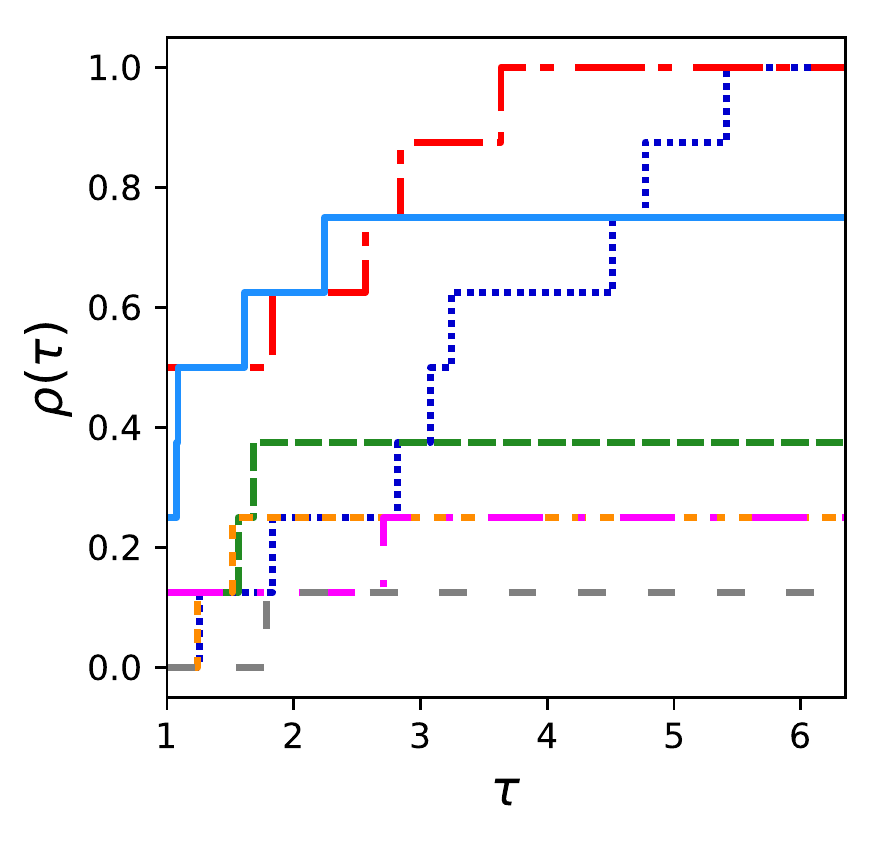}}
	\subfigure[DR implemented with $T_{D,C,\lambda}$]{\includegraphics[width=0.325\linewidth]{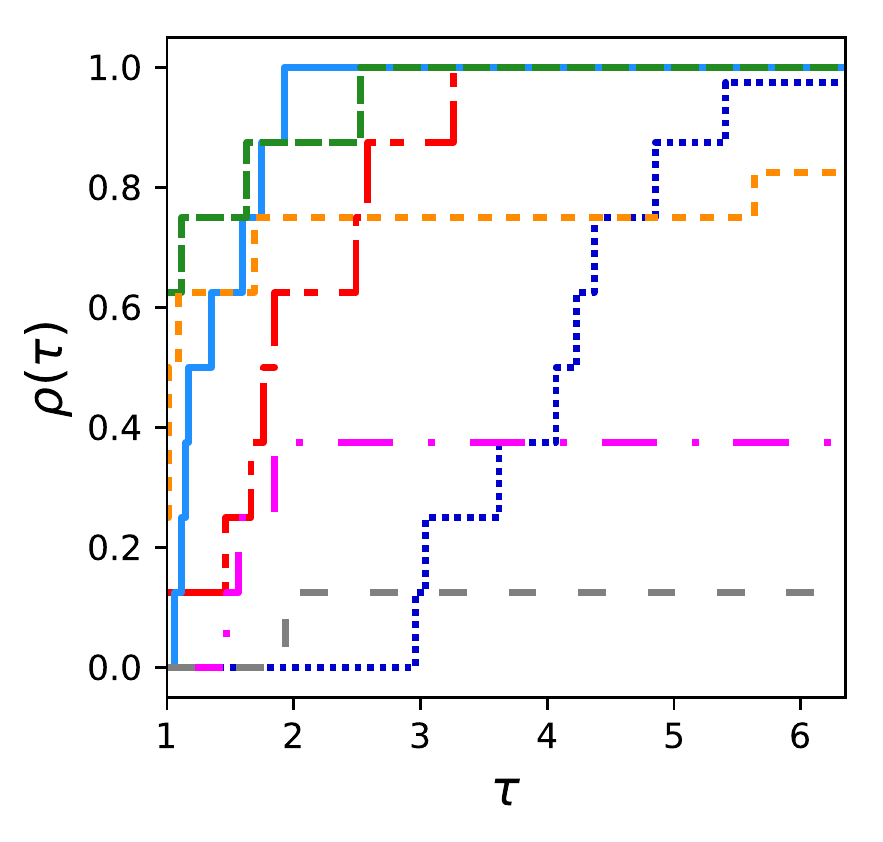}}
	\caption{Performance profiles of the Queens\_$n^2$ experiment for three implementations of DR} \label{fig:nqueensPP}
\end{figure}

\begin{figure}[h!]
\centering
\includegraphics[width=0.65\linewidth]{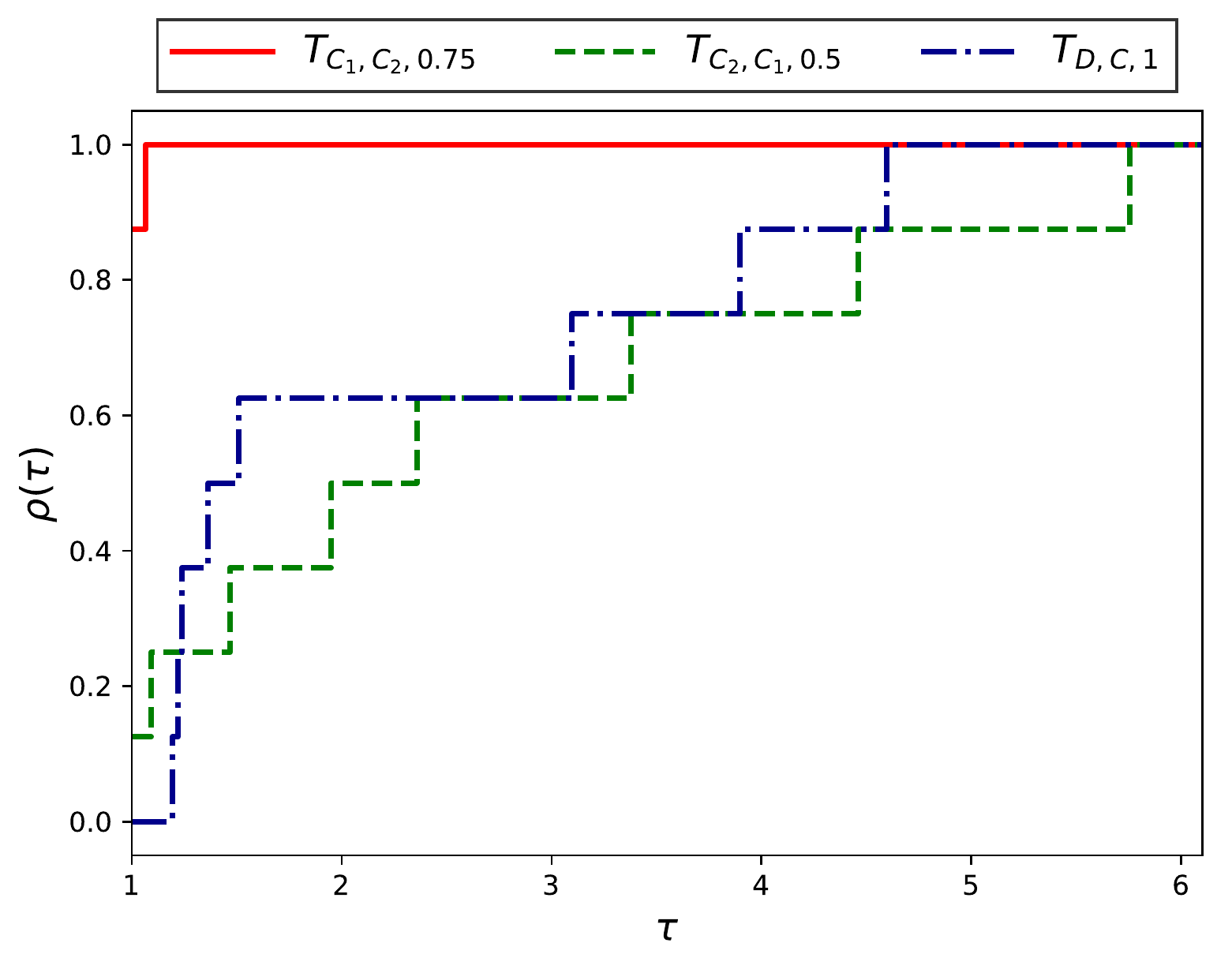}
\caption{Performance profiles of the Queens\_$n^2$ experiment comparing the implementations $T_{C_1,C_2,0.75}$, $T_{C_2,C_1,0.5}$ and $T_{D,C,1}$} \label{fig:nqueensPP_global}
\end{figure}

\begin{remark}[On the machine precision]\label{rem:precision} Our numerical tests show no systematic effect of the machine precision on the average number of iterations per solution, provided the precision is above a modest threshold of about 6 decimal digits. This is consistent with the chaotic dynamics displayed by DR when solving hard problems.
\end{remark}
The behavior explained in Remark~\ref{rem:precision} is demonstrated in the next experiment, where  $T_{D,C,1}$ was implemented for solving the Queens\_$6^2$ and the Queens\_$7^2$ puzzles. For each problem, the algorithm was run from the same starting point using different values of the machine precision. The stopping criterion~\eqref{eq:stop} was decreased to $10^{-5}$ to accommodate the reduced precision. We believe this is still adequate to recover a unique, discrete coloring from the Gram matrix. The results of repeating this experiment for $10$~different random starting points are shown in \Cref{fig:precision}. In~\Cref{fig:digits} we plot the value of ${\rm Error}_{D,C}$ in~\eqref{eq:stop} with respect to the number of iterations for up to 15 digits of precision for one particular random starting point. While these results indicate a high sensitivity to the numerical precision, there is no evidence of a systematic effect. For these experiments, we employed the \texttt{mpmath} library~\cite{mpmath}, which drastically increases the time needed to compute the iterations of the DR algorithm.

\begin{figure}[ht!]
\centering
\includegraphics[width=.7\textwidth]{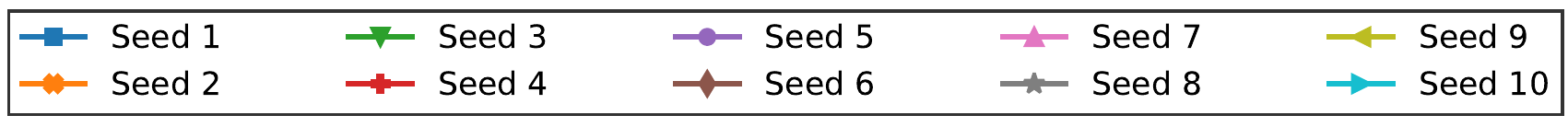}
\subfigure[Queens\_$6^2$]{\includegraphics[width=.495\textwidth]{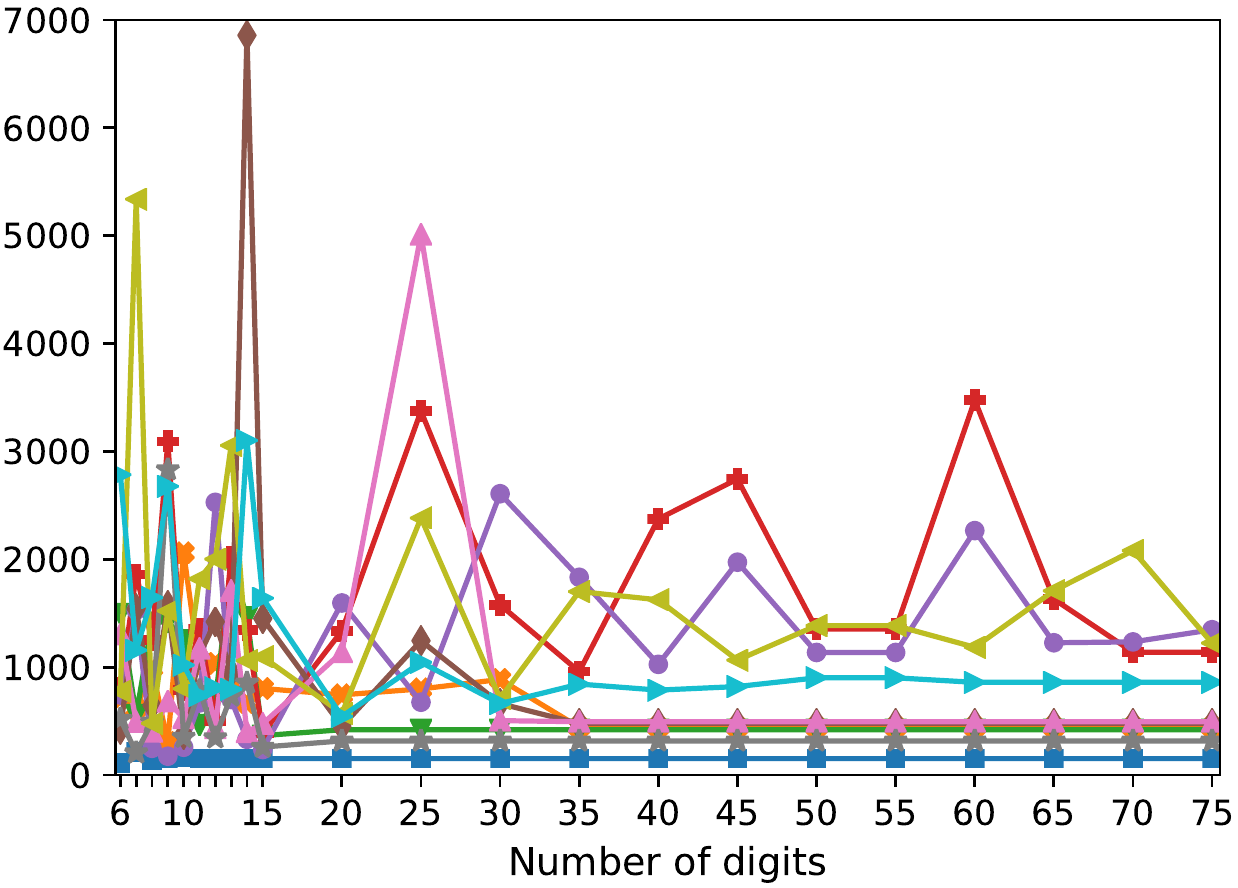}}
\subfigure[Queens\_$7^2$]{\includegraphics[width=.495\textwidth]{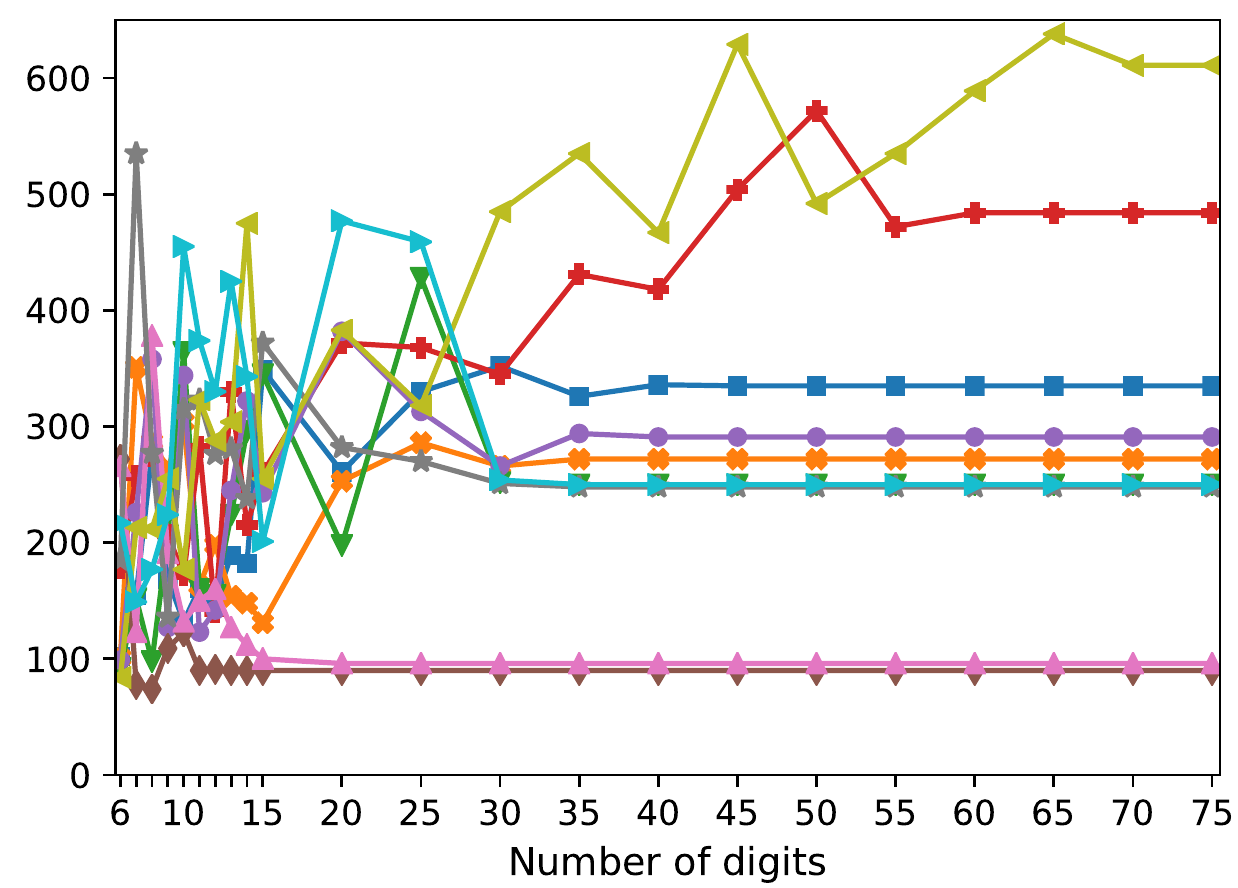}}
\caption{Comparison of the number of number of iterations and the number of digits used in the machine precision for $10$ random starting points, when
  $T_{D,C,1}$ was employed to solve the Queens\_$6^2$ and the Queens\_$7^2$ puzzles. For every starting point and every value of the machine precision, the algorithm found a solution to the puzzle
}\label{fig:precision}
\end{figure}

\begin{figure}[ht!]
\centering
\includegraphics[width=.7\textwidth]{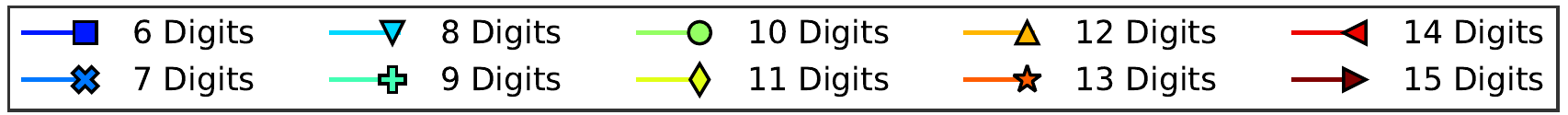}
\subfigure[Queens\_$6^2$]{\includegraphics[width=.495\textwidth]{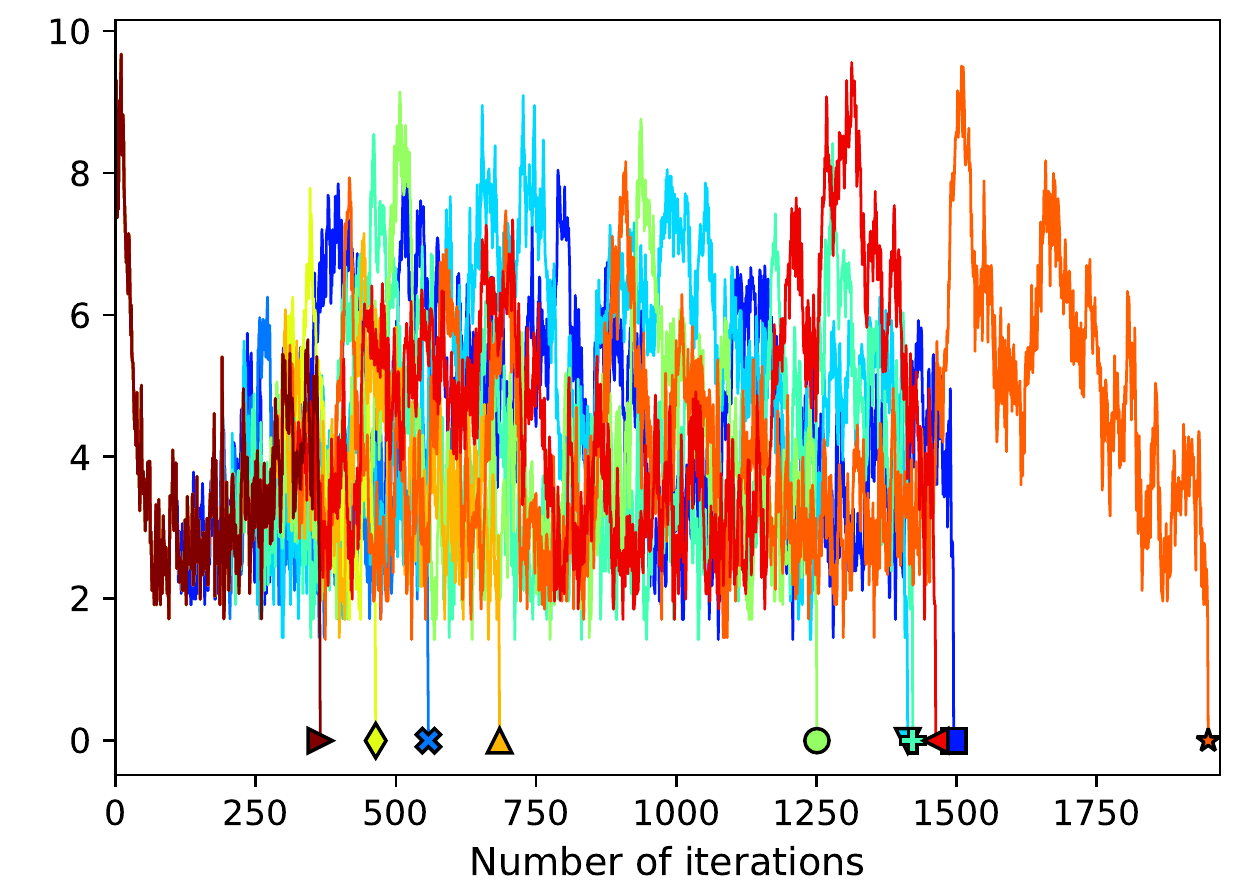}}
\subfigure[Queens\_$7^2$]{\includegraphics[width=.495\textwidth]{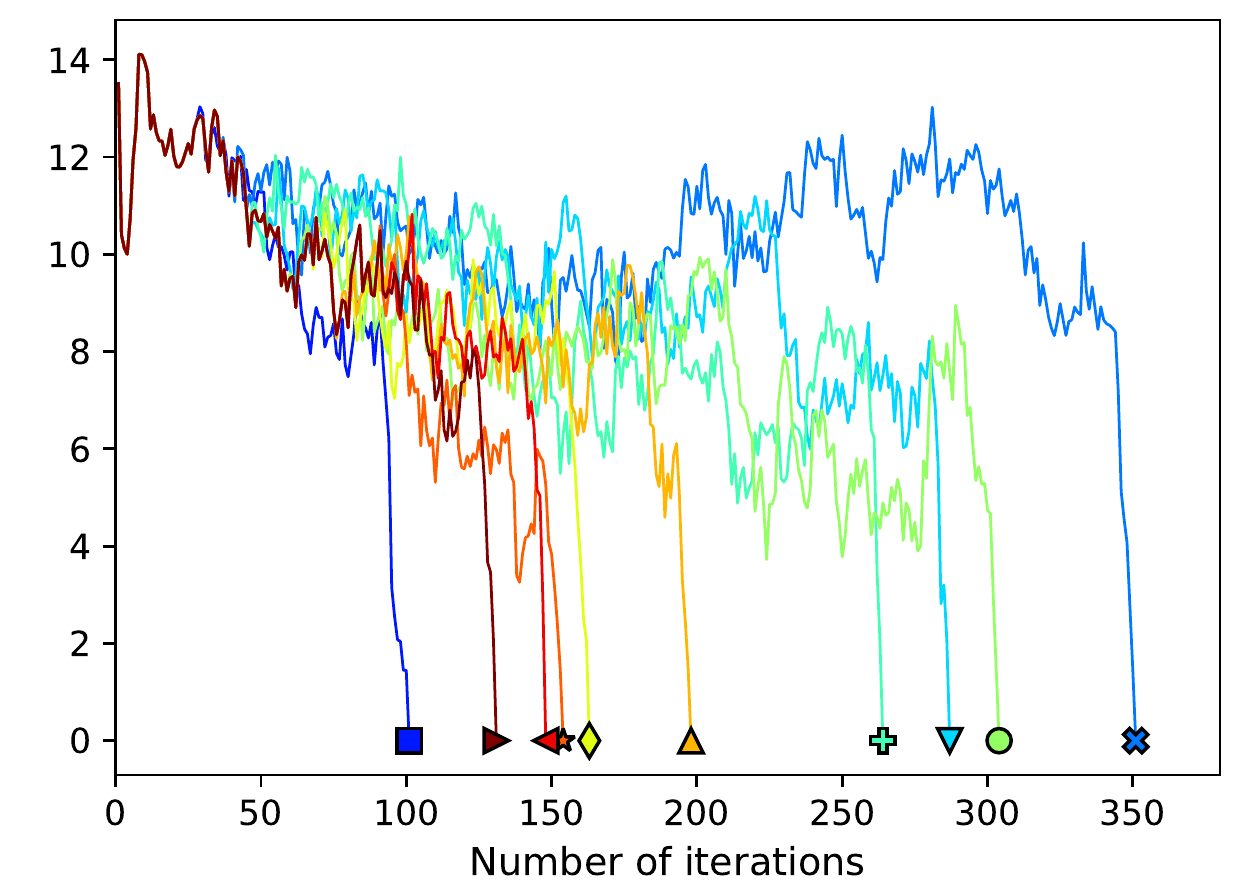}}
\caption{Comparison of the value of ${\rm Error}$ in~\eqref{eq:stop} and the number of iterations for different number of decimal digits used in the machine precision, when
  $T_{D,C,1}$ was employed to solve the Queens\_$6^2$ and the Queens\_$7^2$ puzzles
}\label{fig:digits}
\end{figure}

\subsection{Random colorable graphs}

The hardness of finding a proper coloring of a graph depends on many factors, the single most significant of which is the number of valid colorings. Random colorable graphs are easily constructed, but to be able to draw some consequences from the experiments we run on them, we must generate them in such a way that their complexity is controlled.

We consider the Erd\"{o}s--Renyi model~\cite{ER59}, $\mathbb{G}(\alpha,n)$, which is the ensemble of all graphs with $n$ vertices and $l=\lfloor{\alpha n}\rfloor$ edges, where $\lfloor\cdot\rfloor$ denotes the integer part, endowed with the uniform measure. Hence, $\alpha$ represents the averaged number of edges per node. The probability that a random graph with this distribution is $m$-colorable depends on the magnitude of the parameter~$\alpha$. Precisely, the expected number of proper colorings decreases as $\alpha$ increases. There is an \emph{asymptotic threshold} in the colorable-uncolorable transition denoted $\alpha_s(m)$ (see~\cite[Theorem~1.1]{AF99}). This means that the probability an $m$-coloring exists tends to one as $n$ increases, provided that $\alpha<\alpha_s(m)$, and conversely, it converges to zero for $\alpha>\alpha_s(m)$. The asymptotic threshold is known to be upper-bounded by $\alpha_s(m)\leq \bar{\alpha}_s(m):= \frac{\log m}{\log{\frac{m}{m-1}}}$ (see, e.g.,~\cite[Section~2]{AM99}).

With the number of vertices $n$ and the number of colors $m$ fixed, random graphs sampled from  $\mathbb{G}(\bar{\alpha}_s(m),n)$ are at the $m$-colorability transition and expected to be hard instances, when solvable. In order to avoid non-colorable graphs, the sampling can be modified to ensure the existence of a coloring as follows. First, a partition of $V$ into $m$ groups with approximately equal size is chosen, e.g. consider the equivalence classes defined by the congruence modulo $m$ of the integer vertex labels. Then, $ \lfloor{\bar{\alpha}_s(m) n}\rfloor$ edges are randomly generated from the uniform distribution over the set of all edges connecting two nodes in different groups. \Cref{alg:randG} contains the discussed routine that generates such graphs.

\begin{algorithm}[ht!]
	\caption{Generate an $m$-colorable random graph with low expected number of $m$-colorings}\label{alg:randG}
	\KwIn{$V=\{1,\ldots,n\}$, $m\geq 2$}
	Set $\bar{\alpha}_s(m):=\frac{\log m}{\log{\frac{m}{m-1}}}$, $E:=\emptyset$ and	 $l=0$\;
	\While{{$l<\bar{\alpha}_s(m)n$}}{
		Generate randomly $e:=\{i,j\}\in V\times V$\;
		\If{{$e\not\in E$} \rm{and} {$(i-j)\not\equiv 0 \text{ (mod }m)$}}{
		$E=E\cup \{e\}$\;
		$l=l+1$\;
		}
	}
	\KwOut{$G=(V,E)$}
\end{algorithm}

The goal of our next experiment is to show how the DR algorithm complexity grows with respect to the number of vertices in the graph. We make use of colorable random graphs with low expected number of colors so that we have control of the complexity of our instances.  For each $m\in\{8,9,10\}$ and for each $n\in\{50,75,\cdots,200\}$, we generated $5$ random graphs using~\Cref{alg:randG}. Then, for each graph, the DR algorithm was run from $5$ different starting points (this makes a total of $25$ runs per each pair $(m,n)$). Based on the results in the Queens\_$n^2$ experiment, we implemented DR with $T_{C_1,C_2,\lambda}$. To confirm our previous choice of the best parameter $\lambda=0.75$, we repeated the experiment for each $\lambda\in\{0.25, 0.5,\ldots,1.75\}$. The results shown in~\Cref{fig:rand} confirm that the best choice for general purposes is $\lambda=0.75$. In~\Cref{fig:rand_scale} we plot the number of iterations needed by $T_{C_1,C_2,0.75}$ with respect to the size of the graph, for each $m$. We observe that the number of colors does not have a noticeable effect on the performance of the algorithm. As expected, we come upon an exponential dependence between size and iterations, which is consistent with the NP-hardness of the problem.

\begin{figure}[ht!]
\addtocounter{subfigure}{-1}
	\centering
	\subfigure{\includegraphics[width=0.94\linewidth]{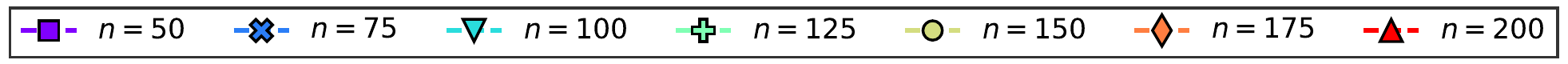}}
	\subfigure[$8$ colors]{\includegraphics[width=0.67\linewidth]{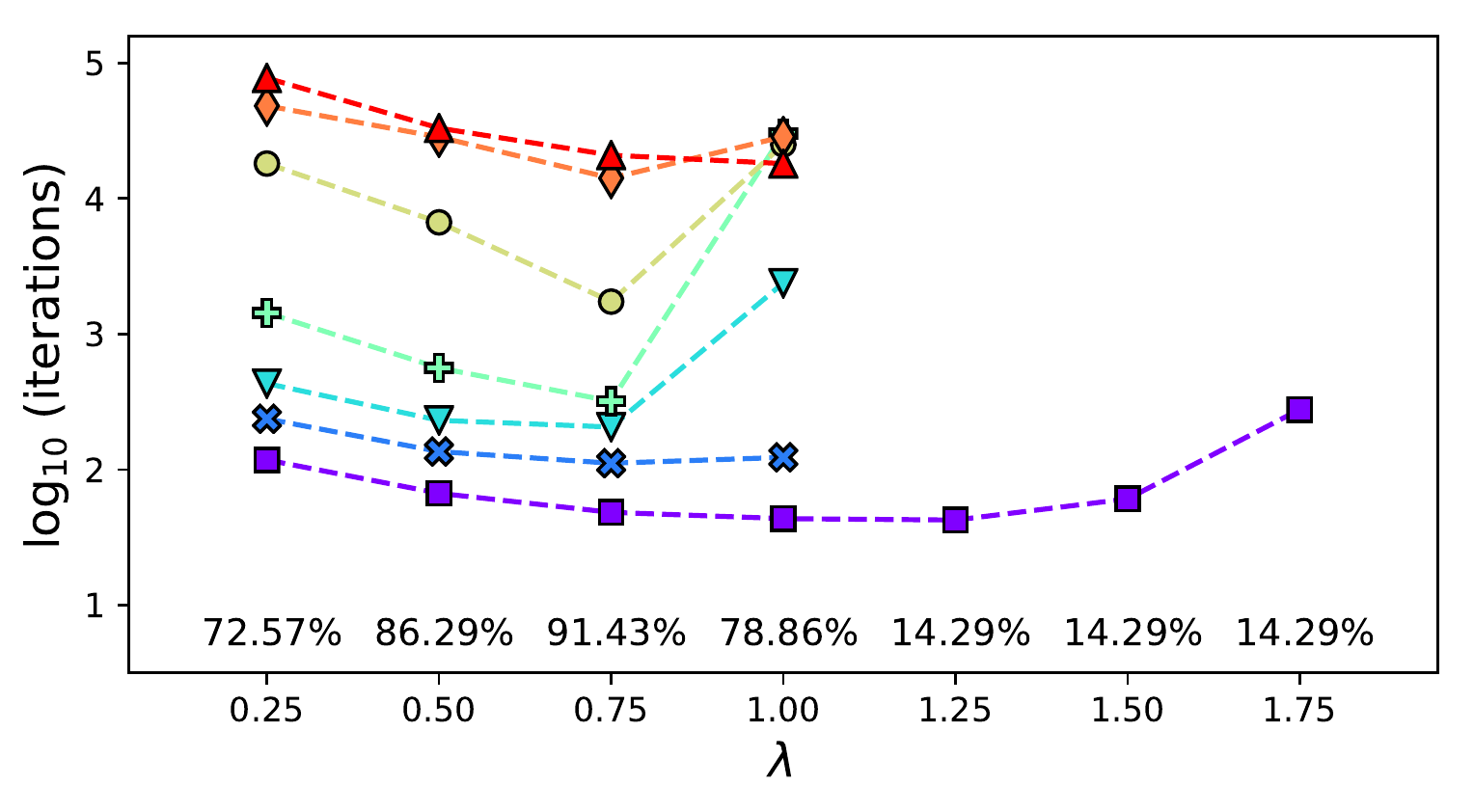}}
	\subfigure[$9$ colors]{\includegraphics[width=0.67\linewidth]{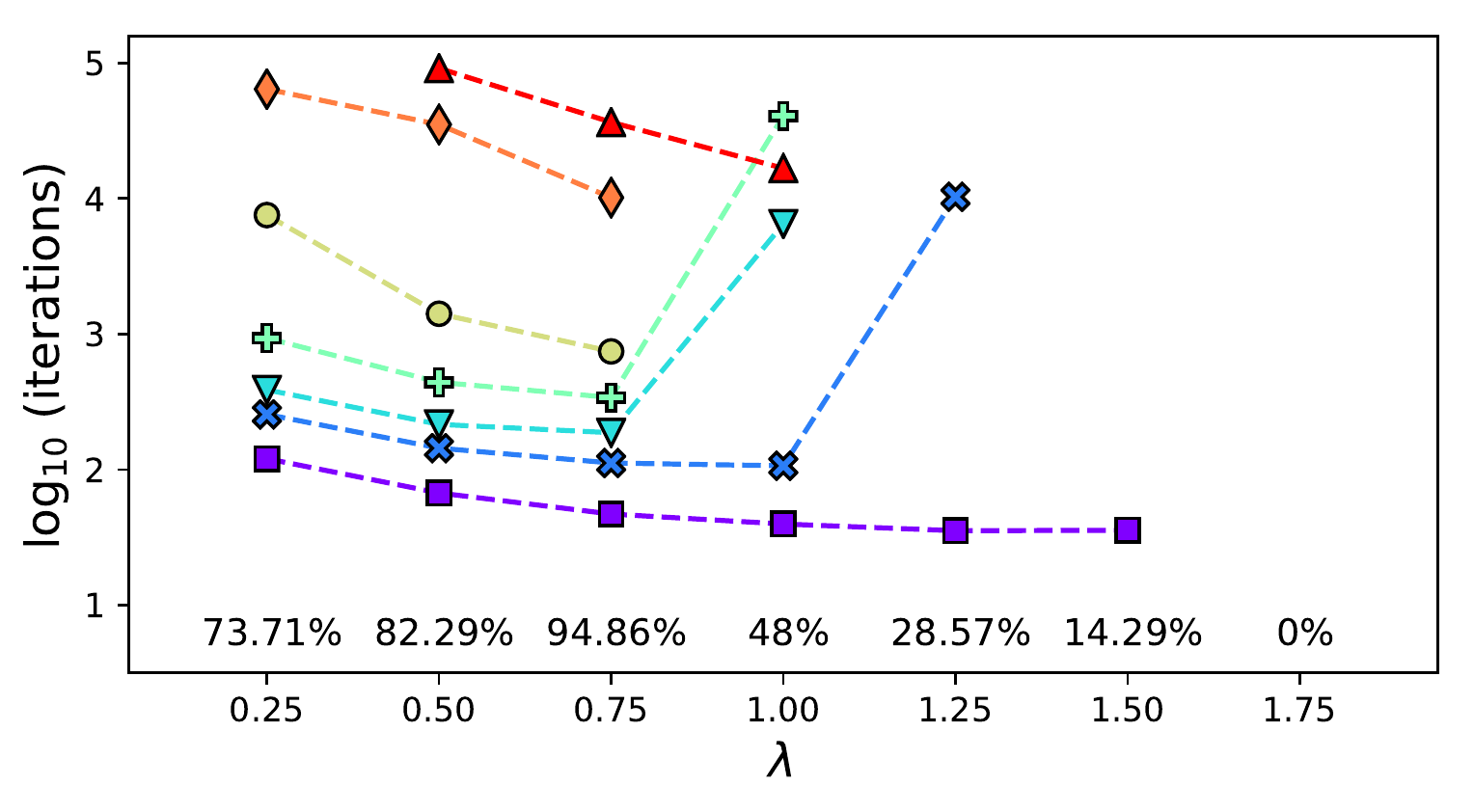}}
	\subfigure[$10$ colors]{\includegraphics[width=0.67\linewidth]{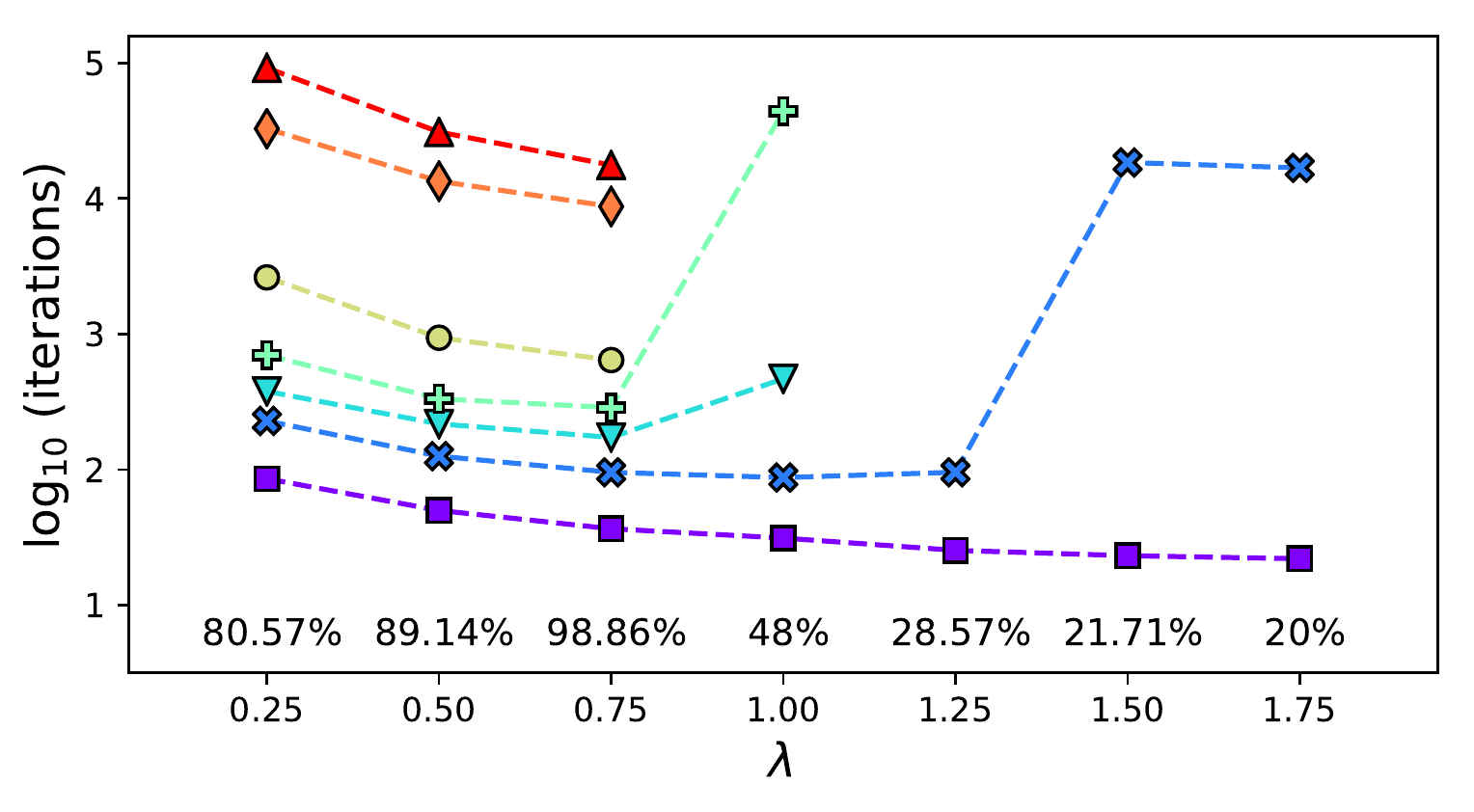}}
	\caption{Results of the experiment on $m$-colorable random graphs for $m=8,9,10$, for the implementation $T_{C_1,C_2,\lambda}$ of DR.  Each marker corresponds to the median of the solved instances among 10 random starting points. At the bottom of each graph we show the percentage of solved instances for each value of $\lambda$. Instances were considered
as unsolved after 100,000 iterations} \label{fig:rand}
\end{figure}

\begin{figure}[ht!]

	\centering
\includegraphics[width=0.67\linewidth]{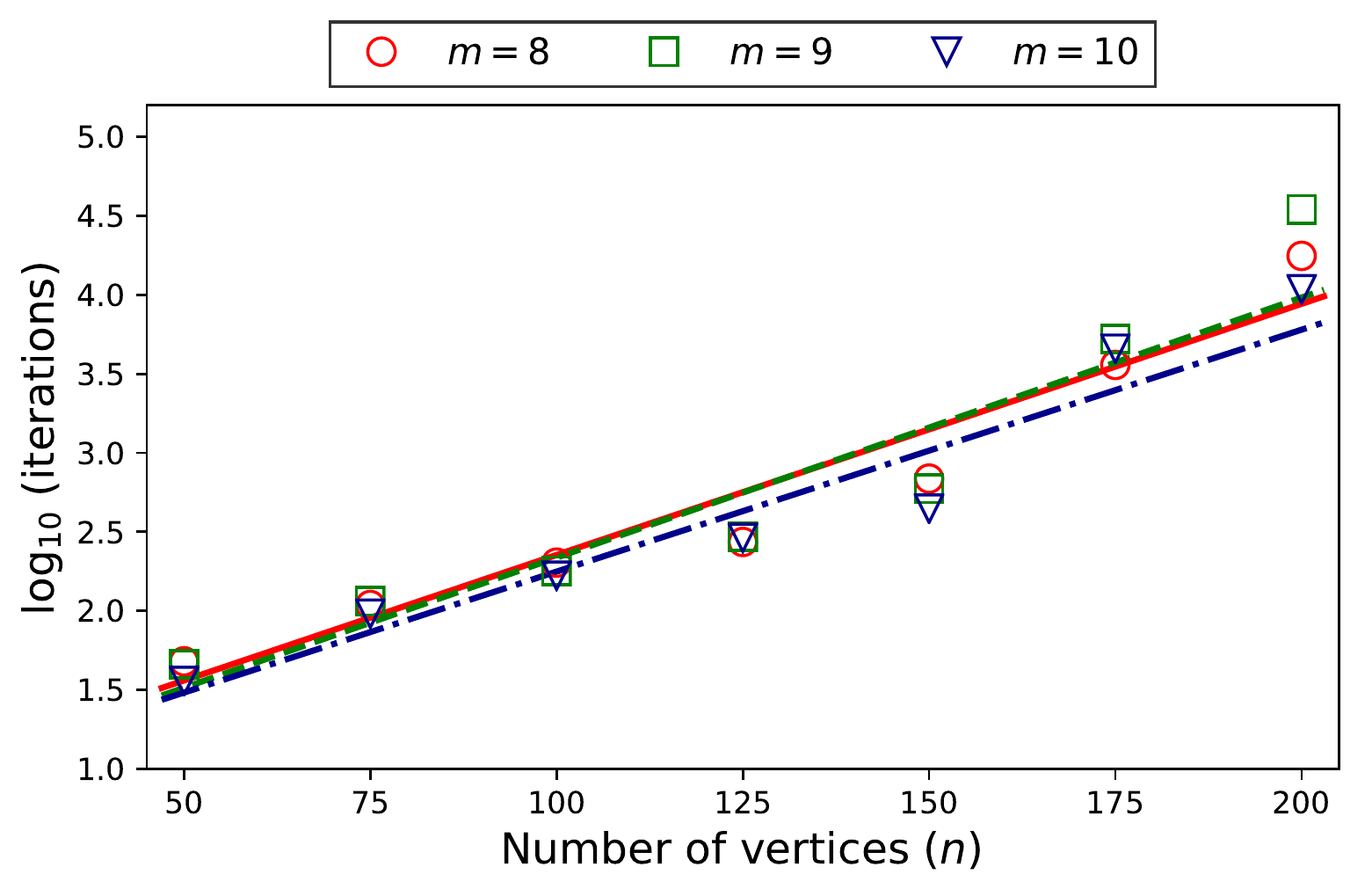}
	\caption{Results of the experiment on $m$-colorable random graphs for $m=8,9,10$, for DR implemented with $T_{C_1,C_2,0.75}$. Each marker corresponds to the median of the solved instances among 10 random starting points, and the lines were obtained by linear regression among all the solved instances}\label{fig:rand_scale}
\end{figure}

\subsection{Windmill graphs}

We turn our attention to a very simple graph for which the binary formulation has trouble finding solutions: the so-called \emph{windmill graph} $\Wd(a,b)$, which is constructed for $a\geq 2$ and $b\geq 2$ by joining $b$ copies of a complete graph with $a$ vertices at a shared vertex. Its chromatic number is $a$, and the graph can be easily colored (there are $a((a-1)!)^b$ different ways to do it). Despite this abundance of valid colorings, all of them are equivalent under a permutation of the colors. Thus, by~\Cref{rmk:equiv_sol}, there exists a unique solution to the rank feasibility problem. This is not the case, however, for the binary formulation.

In our experiments with the binary formulation, the DR algorithm fails to find colorings of windmill graphs rather often. We tentatively attribute this to the high multiplicity of solutions in the binary formulation. In~\cite{AC18} we addressed this problem by augmenting the model with information about the maximal cliques of the graph. A \emph{clique} is a subset of nodes whose induced subgraph is complete; that is, a subset where all the vertices are connected to each other. Further, a clique is \emph{maximal} if it is not contained in a larger clique. The set of maximal cliques would normally be unknown (and difficult to find) for general graphs, so a formulation that does not require this information would generally be preferred.

In our next experiment, whose results are shown in~\Cref{tbl:wd}, we compare the binary formulation with and without maximal clique information, and the rank formulation for coloring sixteen windmill graphs of different parameters.

\newlength{\ancho}
\setlength{\ancho}{.92cm}
\begin{table}[ht!]
\centering
\begin{tabular}{|c|c|| r S[table-format=2.2,table-column-width=\ancho] S[table-format=4,table-column-width=\ancho, group-digits=true] || r S[table-format=2.2,table-column-width=\ancho] S[table-format=4,table-column-width=\ancho] || r S[table-format=2.2,table-column-width=\ancho] S[table-format=4,table-column-width=\ancho] |}
\hhline{--||---||---||---}
\multicolumn{2}{|c||}{\multirow{2}{*}{$\Wd(a,b)$}} & \multicolumn{3}{c||}{\multirow{2}{*}{Binary formulation}} & \multicolumn{3}{c||}{\multirow{1}{*}{Binary formulation}} & \multicolumn{3}{c|}{\multirow{1}{*}{Rank}} \\
\multicolumn{2}{|c||}{} & \multicolumn{3}{c||}{} & \multicolumn{3}{c||}{with clique info.} & \multicolumn{3}{c|}{formulation} \tabularnewline
\hhline{--||---||---||---}
 a & b & Success & {Time} & {Iter.} & {Success} & {Time} & {Iter.} & {Success} & {Time} & {Iter.} \tabularnewline
\hhline{==::===::===::===}
\multirow{4}{*}{5} & 5 & 10/10 & 0.05 & 226 & 10/10 & 0.02 & 63 & 10/10 & 0.01 & 20 \tabularnewline
                   & 10 & 10/10 & 0.13 & 375 & 10/10 & 0.04 & 93 & 10/10 & 0.02 & 33 \tabularnewline
                   & 15 & 9/10 & 0.23 & 503 & 10/10 & 0.06 & 135 & 10/10 & 0.03 & 43 \tabularnewline
                   & 20 & 9/10 & 0.3 & 521 & 10/10 & 0.1 & 170 & 10/10 & 0.05 & 51 \tabularnewline
\hhline{==::===::===::===}
\multirow{4}{*}{10} & 5 & 1/10 & 1.12 & 1886 & 10/10 & 0.12 & 200 & 10/10 & 0.03 & 33 \tabularnewline
                   & 10 & 0/10 & {-} & {-} & 10/10 & 0.27 & 242 & 10/10 & 0.08 & 54 \tabularnewline
                   & 15 & 0/10 & {-} & {-} & 10/10 & 3.47 & 1729 & 10/10 & 0.24 & 86 \tabularnewline
                   & 20 & 0/10 & {-} & {-} & 10/10 & 5.2 & 1531 & 10/10 & 0.45 & 109 \tabularnewline
\hhline{==::===::===::===}
\multirow{4}{*}{15} & 5 & 0/10 & {-} & {-} & 10/10 & 0.54 & 330 & 10/10 & 0.06 & 44 \tabularnewline
                   & 10 & 0/10 & {-} & {-} & 10/10 & 1.69 & 369 & 10/10 & 0.29 & 92 \tabularnewline
                   & 15 & 0/10 & {-} & {-} & 10/10 & 5.21 & 588 & 10/10 & 0.85 & 144 \tabularnewline
                   & 20 & 0/10 & {-} & {-} & 10/10 & 13.35 & 949 & 10/10 & 1.69 & 180 \tabularnewline
\hhline{==::===::===::===}
\multirow{4}{*}{20} & 5 & 0/10 & {-} & {-} & 10/10 & 2.62 & 642 & 10/10 & 0.15 & 68 \tabularnewline
                   & 10 & 0/10 & {-} & {-} & 10/10 & 12.52 & 1059 & 10/10 & 0.63 & 119 \tabularnewline
                   & 15 & 0/10 & {-} & {-} & 10/10 & 16.95 & 729 & 10/10 & 1.83 & 170 \tabularnewline
                   & 20 & 0/10 & {-} & {-} & 8/10 & 31.76 & 828 & 10/10 & 7.3 & 297 \tabularnewline
\hhline{--||---||---||---}
\end{tabular}
\caption{Summary of the results of DR for finding proper colorings of windmill graphs. For each formulation, we show the number of solved instances, the averaged time (in seconds) and the averaged number of iterations. Instances were considered as unsolved after $60$ seconds}
\label{tbl:wd}
\end{table}

We observe that the addition of maximal clique information is crucial for the success of the binary formulation. Without adding it, the DR algorithm was not able to find any solutions for even modestly large values of $a$. On the other hand, the superior performance of the rank formulation for this graph is apparent, both in terms of number of iterations and time. We emphasize again that the rank formulation does not use maximal clique information, and despite this, it achieved a success rate of 100\%.

\subsection{Sudokus}

To test the rank matrix model for precoloring stated in~\Cref{sec:precol_model}, we turn to the Sudoku data set~\texttt{top95}\footnote{\texttt{top95}: \url{http://magictour.free.fr/top95}}, which was the one used in the experiments in~\cite{ABTcomb,AC18} because it contains 95 hard Sudoku instances.

In our first  experiment, we compare the rank formulation (with $T_{C_1,C_2,0.75}$) and the binary formulation for precoloring~\cite[Section~4]{AC18}, with maximal clique information included in the latter. We also compare with the standard divide-and-concur formulation, where solutions for $n\times n$ puzzles are encoded using four copies of $n\times n\times n$ grids of binary indicator variables (see~\cite[Section~6.2]{ABTcomb} for a more detailed explanation), which will be referred to as the \emph{cubic} formulation. For each of these three formulations and each of the $95$ puzzles, the DR algorithm was run from $10$ random starting points. The performance profiles of the results is displayed in~\Cref{fig:sudoku_PP}.

\begin{figure}[ht!]
	\centering
	\includegraphics[width=.62\linewidth]{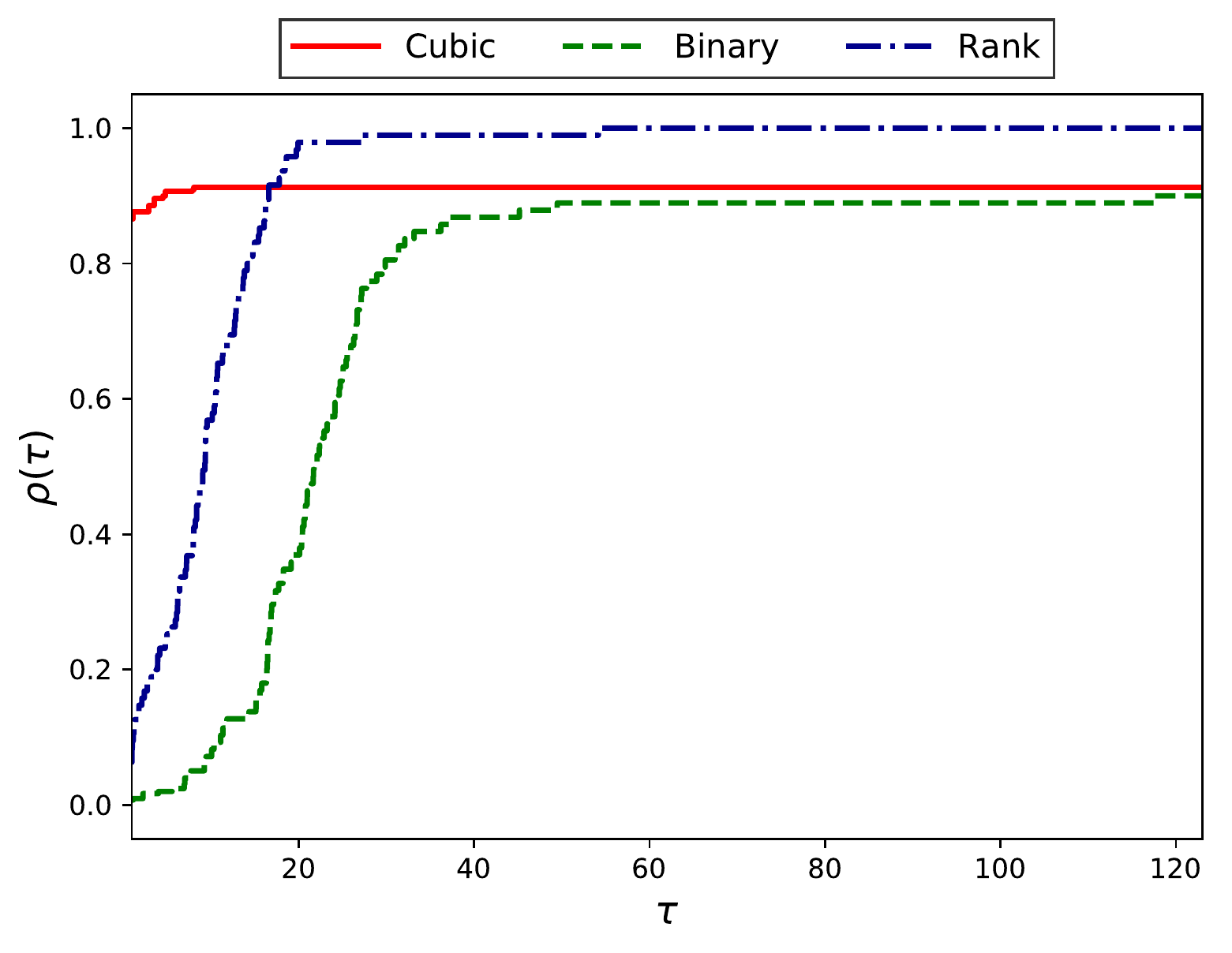}
	\caption{Performance profiles comparing the cubic, binary and rank formulations for solving 95 Sudoku problems. For each problem, 10 starting points were randomly generated. Instances were considered as unsolved after 5 minutes.} \label{fig:sudoku_PP}
\end{figure}

The cubic formulation was the fastest in $86.36$\% of the instances. On average it solved a Sudoku in $4.65$ seconds, while the binary and  rank formulations needed $35.8$ and $13.79$ seconds, respectively. Regarding the success of the algorithm, the cubic and binary formulations solved about $90\%$ of the instances. The rank formulation was the clear winner in terms of success, as it solved every single instance, even for those puzzles in the library on which DR has been observed to be highly unsuccessful (see~\cite[Table~2]{AC18}).

To further challenge the rank formulation, we performed experiments on the so-called `nasty' Sudoku (shown in~\Cref{fig:nasty}). The `nasty' Sudoku has very low success rate in the cubic formulation (see~\cite[Section~6.5]{ABTcomb}), as the algorithm almost always enters a limit cycle (see~\cite[Table~4]{ABTcomb}). This is not the case, however, for the rank formulation. In our next experiment we compare the cubic, binary and rank formulations for solving the `nasty' Sudoku from $100$ random starting points. The results are summarized in~\Cref{fig:nasty}. The rank formulation obtained again a success rate of $100\%$. The second most successful formulation was the binary one, which was only  able to find a solution for $19\%$ of the starting points. So far, we have not been able to find any Sudoku on which the rank formulation failed to find a solution for any starting point.

\setlength{\sudokusize}{3.7cm}\setlength\sudokuthickline{1.2pt}
\renewcommand*\sudokuformat[1]{\small\sffamily #1}
\begin{figure*}
    \begin{center}
\begin{tabular}{cc}
	\begin{minipage}{0.25\textwidth}
		\begin{sudoku}
		 |7| | | | |9| |5| |.
		 | |1| | | | | |3| |.
		 | | |2|3| | |7| | |.
		 | | |4|5| | | |7| |.
		 |8| | | | | |2| | |.
		 | | | | | |6|4| | |.
		 | |9| | |1| | | | |.
		 | |8| | |6| | | | |.
		 | | |5|4| | | | |7|.
		\end{sudoku}
	\end{minipage}
	&
	\begin{minipage}{0.7\textwidth}
	\begin{tabular}{|c||c|c||c|c||c|c|}
		\cline{2-7}
		\multicolumn{1}{c|}{} & \multicolumn{2}{c||}{Cubic} & \multicolumn{2}{c||}{Binary} & \multicolumn{2}{c|}{Rank} \tabularnewline
		\hhline{-||--||--||--}
		Time & Inst. & Cumul. & Inst. & Cumul. & Inst. & Cumul.  \tabularnewline
		\hhline{=::======}
0-24 & 12 & 12\% & 15 & 15\% & 61 & 61\% \tabularnewline
25-49 & 0 & 12\% & 2 & 17\% & 36 & 97\% \tabularnewline
50-99 & 0 & 12\% & 1 & 18\% & 3 & 100\% \tabularnewline
100-299 & 0 & 12\% & 1 & 19\% & 0 & 100\% \tabularnewline
\hhline{-||--||--||--}
Unsolved & 88 & 100\% & 81 & 100\% & 0 & 100\% \tabularnewline
\hhline{-||------}
	\end{tabular}
	\end{minipage}
	\\
	\end{tabular}
	\end{center}
	\caption{Number of solved instances (right), among 100 random starting points, to find the solution of the `nasty' Sudoku (left) by DR with the cubic, binary, and rank formulations. For each interval of time (in seconds), we show the number of solved instances and the cummulative proportion of solved instances for each formulation. The algorithm was stopped after a maximum of 5~minutes, in which case the problem was labeled as ``Unsolved''}\label{fig:nasty}
\end{figure*}

\subsection{DIMACS benchmark instances}

In our final experiment, we test the rank formulation on the widely used graph coloring library from~\texttt{DIMACS benchmark instances}\footnote{\texttt{DIMACS benchmark instances}: \url{http://cse.unl.edu/~tnguyen/npbenchmarks/graphcoloring.html}}. This collection contains various classes of graphs, such as random or quasi-random graphs, problems based on register allocation for variables in real codes, or class scheduling graphs, among others.

The DR algorithm was applied to a wide sample of the aforementioned benchmark instances. Guided by the results in the previous experiments, we used the implementation
$T_{C_1,C_2,0.75}$. For each graph, the algorithm was run from $10$ random starting points and was stopped after a maximum time of one hour. In~\Cref{tbl:dimacs} we present the results of the experiment, as well as the main features of the selected instances. The unsuccessful instances mainly occurred on the very large graphs, on which the algorithm may have succeeded given more time.

\sisetup{group-separator = {,},group-minimum-digits=4,table-number-alignment =right}
\begin{table}[ht!]
\centering\small
\begin{tabular}{l S[table-format=3] S[table-format=4] S[table-format=2] r  S[table-format=4] S[table-format=3.2]}
\hline
\hline
{Instances} & {Vertices} & {Edges} & {Colors} & {Success} & {Iter} & {Time (s)} \tabularnewline
\hline \hline
\texttt{fpsol2.i.1} & 496 & 11654 & 65 & 10/10 & 8984 & 463.94 \tabularnewline
\texttt{fpsol2.i.2} & 451 & 8691 & 30 & 10/10 & 13316 & 495.94 \tabularnewline
\texttt{fpsol2.i.3} & 425 & 8688 & 30 & 10/10 & 14454 & 480.27 \tabularnewline \hline
\texttt{inithx.i.1} & 864 & 18707 & 54 & 10/10 & 16174 & 2443.43 \tabularnewline
\texttt{inithx.i.2} & 645 & 13979 & 31 & 10/10 & 20049 & 1500.45 \tabularnewline
\texttt{inithx.i.3} & 621 & 13969 & 31 & 10/10 & 20604 & 1432.43 \tabularnewline \hline
\texttt{le450\_15a} & 450 & 8168 & 15 & 4/10 & 61365 & 1944.35 \tabularnewline
\texttt{le450\_15b} & 450 & 8169 & 15 & 8/10 & 65537 & 2076.54 \tabularnewline
\texttt{le450\_15c} & 450 & 16680 & 15 & 10/10 & 5464 & 173.1 \tabularnewline
\texttt{le450\_15d} & 450 & 16750 & 15 & 10/10 & 19718 & 619.74 \tabularnewline
\texttt{le450\_25a} & 450 & 8260 & 25 & 10/10 & 1938 & 68.93 \tabularnewline
\texttt{le450\_25b} & 450 & 8263 & 25 & 10/10 & 1849 & 65.82 \tabularnewline
\texttt{le450\_25c} & 450 & 17343 & 25 & 0/10 & {-} & {-} \tabularnewline
\texttt{le450\_25d} & 450 & 17425 & 25 & 0/10 & {-} & {-} \tabularnewline
\texttt{le450\_5a} & 450 & 5714 & 5 & 10/10 & 3071 & 82.47 \tabularnewline
\texttt{le450\_5b} & 450 & 5734 & 5 & 10/10 & 8885 & 238.33 \tabularnewline
\texttt{le450\_5c} & 450 & 9803 & 5 & 10/10 & 3212 & 86.68 \tabularnewline
\texttt{le450\_5d} & 450 & 9757 & 5 & 10/10 & 1644 & 44.49 \tabularnewline \hline
\texttt{mulsol.i.1} & 197 & 3925 & 49 & 10/10 & 2331 & 18.79 \tabularnewline
\texttt{mulsol.i.2} & 188 & 3885 & 31 & 10/10 & 8696 & 63.18 \tabularnewline
\texttt{mulsol.i.3} & 184 & 3916 & 31 & 10/10 & 7814 & 55.88 \tabularnewline
\texttt{mulsol.i.4} & 185 & 3946 & 31 & 10/10 & 8584 & 60.71 \tabularnewline
\texttt{mulsol.i.5} & 186 & 3973 & 31 & 10/10 & 8685 & 62.72 \tabularnewline \hline
\texttt{zeroin.i.1} & 211 & 4100 & 49 & 10/10 & 3014 & 27.1 \tabularnewline
\texttt{zeroin.i.2} & 211 & 3541 & 30 & 10/10 & 4775 & 39.08 \tabularnewline
\texttt{zeroin.i.3} & 206 & 3540 & 30 & 10/10 & 4286 & 34.51 \tabularnewline \hline
\texttt{anna} & 138 & 493 & 11 & 10/10 & 354 & 1.04 \tabularnewline
\texttt{david} & 87 & 406 & 11 & 10/10 & 167 & 0.26 \tabularnewline
\texttt{homer} & 561 & 1628 & 13 & 10/10 & 1222 & 59.01 \tabularnewline
\texttt{huck} & 74 & 301 & 11 & 10/10 & 81 & 0.11 \tabularnewline
\texttt{jean} & 80 & 254 & 10 & 10/10 & 98 & 0.13 \tabularnewline \hline
\texttt{games120} & 120 & 638 & 9 & 10/10 & 109 & 0.24 \tabularnewline \hline
\texttt{miles1000} & 128 & 3216 & 42 & 10/10 & 570 & 2.43 \tabularnewline
\texttt{miles1500} & 128 & 5198 & 73 & 10/10 & 4736 & 24.65 \tabularnewline
\texttt{miles250} & 128 & 387 & 8 & 10/10 & 173 & 0.4 \tabularnewline
\texttt{miles500} & 128 & 1170 & 20 & 10/10 & 307 & 1.07 \tabularnewline
\texttt{miles750} & 128 & 2113 & 31 & 10/10 & 671 & 2.54 \tabularnewline \hline
\texttt{myciel3} & 11 & 20 & 4 & 10/10 & 7 & 0.0 \tabularnewline
\texttt{myciel4} & 23 & 71 & 5 & 10/10 & 15 & 0.0 \tabularnewline
\texttt{myciel5} & 47 & 236 & 6 & 10/10 & 41 & 0.03 \tabularnewline
\texttt{myciel6} & 95 & 755 & 7 & 10/10 & 179 & 0.26 \tabularnewline
\texttt{myciel7} & 191 & 2360 & 8 & 9/10 & 377 & 1.52 \tabularnewline \hline
\texttt{mug88\_1} & 88 & 146 & 4 & 10/10 & 43 & 0.05 \tabularnewline
\texttt{mug88\_25} & 88 & 146 & 4 & 10/10 & 46 & 0.05 \tabularnewline
\texttt{mug100\_1} & 100 & 166 & 4 & 10/10 & 54 & 0.07 \tabularnewline
\texttt{mug100\_25} & 100 & 166 & 4 & 10/10 & 47 & 0.06 \tabularnewline
\hline\hline
\end{tabular}
\caption{Summary of the results of the DR algorithm implemented with $T_{C_1,C_2,0.75}$ for finding proper colorings of a representative sample of DIMACS benchmark instances. For each problem, we show the number of solved runs, the average time (in seconds) and the average number of iterations. We also include the number of nodes and edges, and the chromatic number of each graph. Runs were considered as unsolved after $3600$ seconds}
\label{tbl:dimacs}
\end{table}

\section{Conclusions}

In the emerging field of projection-based heuristic algorithms for solving combinatorially hard problems, the competition is usually framed to be about the choice of operator (DR, ADMM, etc.). In this study, featuring graph vertex coloring with the DR algorithm, we have shown that the choice of constraint formulation has a very significant effect, and in the end may prove to be even more important than the choice of operator. This conclusion comes from numerical experiments demonstrating, over a wide spectrum of instances, the superiority of the rank-constrained matrix formulation~\cite{KMS} over a previously studied formulation based on binary indicator variables.

The failure mechanism of projection-based heuristic algorithms is trapping on limit cycles. Our experiments indicate that the rank-constrained matrix formulation appears to be immune to this problem, achieving 100\% success rates independent of the choice of starting point. Most notable is the success on the so-called `nasty' Sudoku (treated as a graph pre-coloring instance), on which all other known formulations have no better than a $20\%$ success rate.
This approach also does not come at a great cost in implementation, and is able to solve graph coloring instances from the DIMACS benchmark collection with hundreds of vertices and thousands of edges, often in much less than an hour.

We speculate that the good performance of the rank-constrained matrix formulation may be linked to the elimination of the high symmetry-based solution multiplicity of competing formulations. This is strictly an empirical observation, and we have no proposal on how solution multiplicity might be linked to limit cycle behavior. Our results are offered as motivation for pursuing this direction in future research on projection based algorithms.

\paragraph{Acknowledgments}
This work was initiated during the BIRS workshop on Splitting Algorithms, Modern Operator Theory, and Applications, organized by Heinz Bauschke, Regina Burachik and Russell Luke in Oaxaca (Mexico), in 2017. The authors thank the organizers for an excellent meeting and bringing us together.

F.J. Arag\'on and R. Campoy were partially supported by MINECO of Spain  and  ERDF of EU, grant MTM2014-59179-C2-1-P. F.J. Arag\'on was supported by the Ram\'on y Cajal program by MINECO of Spain  and  ERDF of EU (RYC-2013-13327) and R. Campoy was supported by MINECO of Spain and ESF of EU (BES-2015-073360) under the program ``Ayudas para contratos predoctorales para la formaci\'on de doctores 2015''.


\begin{thebibliography}{99}

\bibitem{AF99}
Achlioptas, D., Friedgut, E.:
\newblock A sharp threshold for $k$-colorability.
\newblock Random Struct. Alg. 14, 63--70 (1999)

\bibitem{AM99}
Achlioptas, D., Molloy, M.:
\newblock Almost all graphs with $2.522n$ edges are not $3$-colorable.
\newblock Elec. Jour. Of Comb. 6(1), R29 (1999)

\bibitem{ABMY14}
Arag\'on Artacho, F.J., Borwein, J.M., Mart\'in-M\'arquez, V., Yao, L.:
\newblock Applications of convex analysis within mathematics.
\newblock Math. Program. 148(1--2), Ser. B, 49--88 (2014)

\bibitem{ABTmatrix}
Arag\'on Artacho, F.J., Borwein, J.M., Tam, M.K.:
\newblock Douglas--Rachford feasibility methods for matrix completion problems.
\newblock ANZIAM J. 55(4), 299--326 (2014) 

\bibitem{ABTcomb}
Arag\'on Artacho, F.J., Borwein, J.M., Tam, M.K.:
\newblock Recent results on Douglas--Rachford methods for combinatorial optimization problem.
\newblock J. Optim. Theory. Appl. 163(1), 1--30 (2014) 

\bibitem{ABT16}
Arag\'on Artacho, F.J., Borwein, J.M., Tam, M.K.:
\newblock Global behavior of the Douglas--Rachford method for a nonconvex feasibility problem.
\newblock J. Glob. Optim. 65(2), 309--327 (2016)

\bibitem{AC18}
Arag\'on Artacho, F.J., Campoy, R.:
\newblock Solving graph coloring problems with the Douglas--Rachford algorithm,
\newblock Set-Valued Var. Anal. 26(2), 277--304 (2018)

\bibitem{BBR78}
Baillon, J.B., Bruck, R.E., Reich, S.:
\newblock On the asymptotic behavior of nonexpansive mappings and semigroups in Banach spaces.
\newblock Houston J. Math. 4(1), 1--9 (1978)

\bibitem{BC17}
Bauschke, H.H., Combettes, P.L.:
\newblock Convex analysis and monotone operator theory in Hilbert spaces, 2nd edn.
\newblock Springer, Berlin (2017)


\bibitem{BKroad}
Bauschke, H.H., Koch, V.R.:
\newblock Projection methods: Swiss army knives for solving feasibility and best approximation problems with halfspaces.
\newblock Contemp. Math. 636, 1--40 (2015) 


\bibitem{BNlocal}
Bauschke, H.H., Noll, D.:
\newblock On the local convergence of the Douglas--Rachford algorithm.
\newblock Arch. Math. 102(6), 589--600 (2014) 


\bibitem{benoist}
Benoist, J.:
\newblock The Douglas--Rachford algorithm for the case of the sphere and the line.
\newblock J. Global Optim. 63(2), 363--380 (2015)

\bibitem{C12}
Cegielski, A.:
\newblock Iterative methods for fixed point problems in Hilbert spaces.
\newblock Lecture Notes in Mathematics, 2057. Springer, Heidelberg (2012)

\bibitem{C04}
Chaitin, G.J.:
\newblock Register allocation and spilling via graph coloring.
\newblock SIGPLAN Not., 39(4), 66--74 (2004)

\bibitem{DM02}
Dolan, E.D., Mor\'e, J.J.:
\newblock Benchmarking optimization software with performance profiles.
\newblock Math. Program. 91(2), Ser. A, 201--213 (2002)

\bibitem{Elser}
Elser, V., Rankenburg, I., Thibault, P.:
\newblock Searching with iterated maps.
\newblock Proc. Natl. Acad. Sci. 104(2), 418--423 (2007)

\bibitem{ER59} Erd\"{o}s, P., R\'enyi, A.:
\newblock On random graphs I.
\newblock Publ. Math. Debrecen 6, 290--297 (1959)


\bibitem{FT12}
Formanowicz, P., Tana\'s, K.:
\newblock A survey of graph coloring - its types, methods and applications.
\newblock Foundations of Computing and Decision Sciences, 37(3), 223--238 (2012)

\bibitem{GJS76}
Garey, M.R., Johnson, D.S., So, H.C.:
\newblock An application of graph coloring to printed circuit testing.
\newblock IEEE Transactions on circuits and systems, 23(10), 591--599 (1976)

\bibitem{H80}
Hale, W.K.:
\newblock Frequency assignment: Theory and applications.
\newblock Proceedings of the IEEE, 68(12), 1497--1514 (1980)

\bibitem{HLnonconvex}
Hesse, R., Luke, D.R.:
\newblock Nonconvex notions of regularity and convergence of fundamental algorithms for feasibility problems.
\newblock SIAM J. Optim. 23(4), 2397--2419 (2013)

\bibitem{HJ13}
Horn, R.A., Johnson, C.R.:
\newblock Matrix analysis. Second edition.
\newblock Cambridge University Press, Cambridge (2013)

\bibitem{ISU16}
Izmailov, A.F., Solodov, M.V., Uskov, E.T.:
\newblock Globalizing stabilized sequential quadratic programming method by smooth primal-dual exact penalty function.
\newblock J. Optim. Theor. Appl. 169(1), 1--31 (2016)

\bibitem{JT95}
Jensen, T.R., Toft, B.:
\newblock Graph coloring problems.
\newblock John Wiley \& Sons, New York (1995)


\bibitem{mpmath}
Johansson,  F: \texttt{mpmath}, version 1.0 (2017), \href{http://mpmath.org}{http://mpmath.org}

\bibitem{KMS}
Karger, D., Motwani, R., Sudan, M.:
\newblock Approximate graph coloring by semidefinite programming.
\newblock Journal of the ACM (JACM) 45(2), 246--265 (1998)

\bibitem{K72}
Karp, R.M.:
\newblock Reducibility among combinatorial problems.
\newblock In Complexity of Computer Computations, R. Miller and J. Thatcher, eds., Plenum Press, New York, 85--103 (1972)

\bibitem{L79}
Leighton, F.T.:
\newblock A graph coloring algorithm for large scheduling problems.
\newblock J. Res. Nat. Bur. Standard 84(6), 489--506 (1979)

\bibitem{L16}
Lewis, R.M.R.:
\newblock A Guide to Graph Colouring: Algorithms and Applications.
\newblock Springer International Publishing (2016)

\bibitem{PMX98}
Pardalos, P.M., Mavridou, T., Xue, J.:
\newblock The graph coloring problem: A bibliographic survey.
\newblock In Handbook of combinatorial optimization, Springer US, 1077--1141 (1998)

\bibitem{OEIS} OEIS Foundation Inc.:
\newblock The On-Line Encyclopedia of Integer Sequences (2018), \href{https://oeis.org/A088202}{https://oeis.org/A088202}

\bibitem{PW02} Parks, H.R., Wills, D.C.:
\newblock An Elementary Calculation of the Dihedral Angle of the Regular N-Simplex.
\newblock The American Mathematical Monthly 109(8) (2002), 756--758.

\bibitem{Plinear}
Phan, H.M.:
\newblock Linear convergence of the Douglas--Rachford method for two closed sets.
\newblock Optim. 65(2), 369--385 (2016)

\bibitem{Pierra}
Pierra, G.:
\newblock Decomposition through formalization in a product space.
\newblock Math. Program. 28, 96--115 (1984)


\bibitem{T17} Tam, M.K.:
\newblock Regularity properties of non-negative sparsity sets.
\newblock J. Math. Anal. Appl. 447(2), 758--777 (2017)


\end{thebibliography}
\end{document}